\newtheorem{theorem}{Theorem}[section]
\newtheorem{lemma}[theorem]{Lemma}
\newtheorem{proposition}[theorem]{Proposition}
\newcommand{\RR}{\mathbb{R}}
\newcommand{\NN}{\mathbb{N}}
\newcommand{\BofX}{\mathcal{B}(\XX)}
\newcommand{\KofX}{\mathcal{K}(\XX)}
\newcommand{\XX}{\mathfrak{X}}
\newcommand{\Mat}{\mathrm{Mat}}
\begin{document}
%\abstract{Primary 37D25, 47A13}
\title[Generalized Berger-Wang formula]{The generalized Berger-Wang formula and the spectral radius of linear cocycles}
\author{Ian D. Morris}
\begin{abstract}
%A classical formula due to Israel Gelfand relates the spectral radius of a matrix or bounded linear operator to the growth rate of its norm. In 1992 Daubechies and Lagarias conjectured an extension of this result to finite sets of matrices, which was proved by Berger and Wang in the same year. We give an alternative proof of their result based on multiplicative ergodic theory, via a result on the spectral radius of a linear cocycle which may be of independent interest. Our method generalises to the case of certain quasi-compact operators on arbitrary Banach spaces, allowing us to recover a theorem of V. Shul'man and Yu. Turovski\u{\i}.

Using multiplicative ergodic theory we prove two formulae describing the relationships between different joint spectral radii for sets of bounded linear operators acting on a Banach space. In particular we recover a formula recently proved by V. S. Shulman and Yu. V. Turovski\u{\i} using operator-theoretic ideas. As a byproduct of our method we answer a question of J. E. Cohen on the limiting behaviour of the spectral radius of a measurable matrix cocycle.

Key words and phrases: multiplicative ergodic theorem, spectral radius, joint spectral radius, Berger-Wang formula. MSC codes: 37H15 47D03 [15A18 37A30]
\end{abstract}
\maketitle
\section{Introduction and statement of results}
Let $\mathsf{A}$ be a set of $d\times d$ real matrices. The joint spectral radius of $\mathsf{A}$ was defined by G.-C. Rota and W. G. Strang in \cite{RS2} to be the quantity
\[\hat\varrho(\mathsf{A}):= \lim_{n \to \infty} \sup\left\{\|A_{i_n}\cdots A_{i_1}\|^{1/n} \colon A_i \in \mathsf{A}\right\}.\]
The joint spectral radius has since emerged as a useful tool in a number of research areas including the theory of control and stability \cite{Ba,Gu,Koz}, coding theory \cite{MO}, wavelet regularity \cite{DL,DL0,Mae2}, and the study of  numerical solutions to ordinary differential equations \cite{GZ}. The following characterisation of the joint spectral radius, which we term the \emph{Berger-Wang formula}, is due to M. A. Berger and Y. Wang \cite{BW}, following a conjecture in \cite{DL}:
\begin{theorem}\label{BuggerWank}
Let $\mathsf{A}$ be a nonempty bounded set of $d \times d$ real matrices. Then
\[\hat\varrho(\mathsf{A})=\limsup_{n \to \infty} \sup \left\{\rho(A_{i_n}\cdots A_{i_1})^{1/n}\colon A_i \in \mathsf{A}\right\}.\]
\end{theorem}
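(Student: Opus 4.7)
The plan is to use multiplicative ergodic theory to reduce the theorem to a statement about a single invariant measure on a symbolic space, and then to approximate the top Lyapunov exponent of the associated matrix cocycle by spectral radii of finite products corresponding to suitably recurrent orbits. The easy direction, that $\limsup_{n\to\infty} \sup\{\rho(A_{i_n}\cdots A_{i_1})^{1/n}\} \leq \hat\varrho(\mathsf{A})$, is immediate from $\rho(B) \leq \|B\|$ for every matrix $B$, so I will concentrate on the reverse inequality.

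For that reverse inequality, I would first replace $\mathsf{A}$ with its closure (which changes neither side) and consider the one-sided shift $\sigma$ on the compact metrisable space $\Omega = \overline{\mathsf{A}}^{\mathbb{N}}$. Define the subadditive sequence $\Phi_n(\omega) := \log\|A_{\omega_n}\cdots A_{\omega_1}\|$. A variational principle for continuous subadditive cocycles then provides an ergodic $\sigma$-invariant Borel probability measure $\mu$ on $\Omega$ whose Lyapunov exponent $\lambda(\mu) := \lim_{n\to\infty} \frac{1}{n}\int \Phi_n\, d\mu$ equals $\log\hat\varrho(\mathsf{A})$; existence of a maximiser uses compactness of the space of invariant measures together with the upper semicontinuity of $\mu \mapsto \lambda(\mu)$ that follows from subadditivity and continuity of $\Phi_n$.

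Next I would invoke the Oseledets multiplicative ergodic theorem for the matrix cocycle generated by $\omega \mapsto A_{\omega_1}$ over $(\Omega,\sigma,\mu)$. This yields, for $\mu$-almost every $\omega$, a measurable Lyapunov filtration whose top exponent is exactly $\lambda(\mu)$. The key step, and the main obstacle, is to pass from this $\mu$-a.e.\ \emph{norm}-growth statement to a statement about the \emph{spectral radii} of explicit finite products. The natural tool is Poincar\'e recurrence combined with the geometric content of Oseledets: for a typical $\omega$ and $\varepsilon>0$ one can find arbitrarily large $n$ such that the Oseledets subspaces at $\sigma^n\omega$ are nearly aligned with those at $\omega$ and such that $\|A_{\omega_n}\cdots A_{\omega_1}\| \geq e^{n(\lambda(\mu)-\varepsilon)}$. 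In a basis adapted to the Oseledets decomposition, this near-closure forces $\rho(A_{\omega_n}\cdots A_{\omega_1}) \geq e^{n(\lambda(\mu)-2\varepsilon)}$ for all sufficiently large such $n$.

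Executing this final approximation cleanly is the main difficulty, because controlling the angles between Oseledets subspaces along a generic orbit requires Pesin-style estimates together with care about the non-uniform nature of the convergence in the MET. I expect a clean proof to sidestep part of this by first passing to an invariant subcocycle on which the top exponent is simple, or to a quotient cocycle corresponding to the top block of the Oseledets filtration, so that the spectral-radius lower bound reduces to controlling a one-dimensional recurrence. Letting $\varepsilon \to 0$ then yields the desired lower bound on $\limsup_{n\to\infty} \sup\{\rho(A_{i_n}\cdots A_{i_1})^{1/n}\}$, and combined with the easy direction this completes the proof.
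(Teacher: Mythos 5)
Your overall strategy is exactly the one this paper uses (for its generalization of the statement): pass to the closure, code products by a shift on a product space, use the subadditive variational principle of Schreiber/Stark--Sturman to produce an ergodic maximizing measure with top exponent $\log\hat\varrho(\mathsf{A})$, and then upgrade almost-everywhere norm growth to spectral-radius lower bounds along recurrent orbits via the multiplicative ergodic theorem. However, the decisive step --- converting the a.e.\ norm-growth statement into $\rho(A_{\omega_n}\cdots A_{\omega_1})\geq e^{n(\lambda-2\varepsilon)}$ for infinitely many $n$ --- is precisely where your write-up stops being a proof: you assert that ``near-alignment of the Oseledets subspaces forces'' the spectral-radius bound, and then concede that executing this cleanly is the main difficulty, hoping to sidestep it by reducing to a simple top exponent or a one-dimensional quotient. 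That reduction is neither justified nor needed, and without a concrete mechanism the claimed implication is a gap: closeness of the Oseledets data at $\omega$ and $\sigma^n\omega$ does not by itself produce an eigenvalue of the single matrix $B=A_{\omega_n}\cdots A_{\omega_1}$ of the right size; one must exhibit something $B$-invariant on which $B$ expands. The paper does this with an invariant-cone argument (following Kalinin): writing $V(x)$ for the fast Oseledets space and $W(x)$ for the equivariant slow complement, with projection $P(x)$ onto $V(x)$ along $W(x)$, a Poincar\'e-recurrence lemma (using separability/$\mu$-continuity of $x\mapsto P(x)$) gives infinitely many $n$ with $\|P(x)-P(T^nx)\|$ small; one then checks that $B$ maps the cone $K(x,1)=\{u:\|P(x)u\|\geq\|Q(x)u\|\}$ into itself and expands every vector in it by at least $e^{n(\lambda-3\varepsilon)}$ (here the \emph{uniform} convergence on $V(x)$ in the MET, plus the strict gap to the growth on $W(x)$, is what is really used --- not the bound on $\|B\|$ alone), whence iterating $B$ on a single cone vector gives $\rho(B)\geq e^{n(\lambda-3\varepsilon)}$. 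This works for a fast space of arbitrary dimension, so no simplicity or quotient reduction is required.

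Two further points you would need to repair even at the structural level. First, over the one-sided shift $\overline{\mathsf{A}}^{\NN}$ the MET yields only the Lyapunov filtration, which gives an equivariant slow subspace but no equivariant fast complement; the cone argument needs the genuine splitting $V(x)\oplus W(x)$ with both summands equivariant, so you should work over the two-sided shift (equivalently, pass to the natural extension), exactly as the paper does. Second, the degenerate cases need a word: if $\hat\varrho(\mathsf{A})=0$ the identity is trivial, and when invoking the variational principle one must allow $\log\|\cdot\|$ to take the value $-\infty$ and select an \emph{ergodic} maximizing measure (the paper cites a lemma for this). With those repairs and with the cone/recurrence argument carried out in full, your outline becomes the paper's proof; as written, the central implication is asserted rather than proved.
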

%Some alternatives to Berger and Wang's original proof have been given in \cite{B,E,SWP}. An incidental feature of the present article is to provide the first proof based on multiplicative ergodic theory.

The concept of joint spectral radius generalizes directly to the context of bounded operators on Banach spaces, where it may be used to establish a number of results in invariant subspace theory \cite{S,ST1,ST3}. In this article we use ergodic theory to prove an analogue of Theorem \ref{BuggerWank} for the case in which $\mathsf{A}$ is a set of bounded linear operators acting on a Banach space $\XX$. We also obtain a further relationship between two other joint spectral radii which may be defined in this context. In order to state our main results we require the following definitions.

Let $\XX$ be a Banach space and let $B_{\XX}$ denote its unit ball. We define the \emph{Hausdorff measure of noncompactness} of an operator $L \in \BofX$, which we denote by $\|L\|_\chi$, to be the infimum of all positive real numbers $\varepsilon$ for which $LB_{\XX}$ admits a finite $\varepsilon$-net. We define a second functional on $\BofX$ by $\|L\|_f:=\inf \{\|L-K\|\colon \mathrm{rank}\,K<\infty\}$. Some useful properties of these two quantities are described in the following proposition. %For a proof see e.g. \cite{LS} (check this!).
\begin{proposition}\label{basic}
Let $\XX$ be a Banach space. Then the functions $\|\cdot\|_\chi, \|\cdot\|_f \colon \BofX \to \RR$ are seminorms and are Lipschitz continuous. For every $L \in \BofX$ we have $\|L\|_\chi = 0 \iff L \in \KofX$ and $\|L\|_\chi \leq \|L\|_f \leq \|L\|$. The seminorms $\|\cdot\|_\chi$ and $\|\cdot\|_f$ are \emph{submultiplicative:} for every $L_1,L_2 \in \BofX$ we have $\|L_1L_2\|_\chi \leq \|L_1\|_\chi \|L_2\|_\chi$ and similarly for $\|\cdot\|_f$.
\end{proposition}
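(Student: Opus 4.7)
The plan is to treat the two seminorms separately, since each admits a natural structural description that makes the axioms nearly routine.

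For $\|\cdot\|_f$, observe that it is precisely the quotient seminorm on $\BofX$ induced by the linear subspace of finite-rank operators. Consequently the seminorm axioms, the bound $\|L\|_f \leq \|L\|$ (by taking the zero operator as approximant), and $1$-Lipschitz continuity with respect to $\|\cdot\|$ are all immediate. For submultiplicativity I would write $L_i = A_i + K_i$ with $K_i$ of finite rank and $\|A_i\|$ within $\varepsilon$ of $\|L_i\|_f$, and expand
\[L_1 L_2 = A_1 A_2 + K_1 A_2 + A_1 K_2 + K_1 K_2,\]
observing that the last three terms together form a finite-rank operator, so $\|L_1 L_2\|_f \leq \|A_1 A_2\| \leq (\|L_1\|_f + \varepsilon)(\|L_2\|_f + \varepsilon)$; sending $\varepsilon \to 0$ gives the claim.

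For $\|\cdot\|_\chi$, the basic geometric observation is the inclusion $(L_1 + L_2) B_\XX \subseteq L_1 B_\XX + L_2 B_\XX$, which allows one to form an $(\varepsilon_1 + \varepsilon_2)$-net for $(L_1 + L_2) B_\XX$ as the Minkowski sum of $\varepsilon_i$-nets for the $L_i B_\XX$, yielding subadditivity; positive homogeneity is clear from $(\lambda L) B_\XX = |\lambda|\, L B_\XX$. The inequality $\|L\|_\chi \leq \|L\|_f$ follows because for any decomposition $L = K + A$ with $K$ finite-rank, $K B_\XX$ is totally bounded, and an $\varepsilon$-net for $K B_\XX$ is automatically an $(\varepsilon + \|A\|)$-net for $L B_\XX$. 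The equivalence $\|L\|_\chi = 0 \iff L \in \KofX$ is the standard fact that relatively compact subsets of the complete space $\XX$ are exactly the totally bounded ones. Lipschitz continuity then follows by combining subadditivity with $\|L\|_\chi \leq \|L\|$.

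The step requiring the most attention is submultiplicativity of $\|\cdot\|_\chi$, since it is not a direct corollary of a quotient construction. Given $\varepsilon > \|L_2\|_\chi$ and $\delta > \|L_1\|_\chi$, I would choose a finite $\varepsilon$-net $\{y_j\}$ for $L_2 B_\XX$ and a finite $\delta$-net $\{z_k\}$ for $L_1 B_\XX$. For $x \in B_\XX$, write $L_2 x = y_j + \varepsilon e$ for some index $j$ and some $e \in B_\XX$; then $L_1 L_2 x = L_1 y_j + \varepsilon L_1 e$, and $L_1 e$ lies within $\delta$ of some $z_k$, placing $L_1 L_2 x$ within $\varepsilon\delta$ of $L_1 y_j + \varepsilon z_k$. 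This exhibits $\{L_1 y_j + \varepsilon z_k\}$ as a finite $\varepsilon\delta$-net for $L_1 L_2 B_\XX$, so $\|L_1 L_2\|_\chi \leq \varepsilon\delta$; letting $\varepsilon \to \|L_2\|_\chi$ and $\delta \to \|L_1\|_\chi$ concludes the argument.
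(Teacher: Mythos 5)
Your proof is correct. Note that the paper itself offers no argument for this proposition --- it cites the properties of $\|\cdot\|_\chi$ as standard and leaves those of $\|\cdot\|_f$ and the inequality $\|L\|_\chi \leq \|L\|_f$ to the reader --- and your treatment (quotient seminorm modulo the finite-rank operators for $\|\cdot\|_f$, Minkowski sums of nets for subadditivity, the rescaled-net argument $\{L_1 y_j + \varepsilon z_k\}$ for submultiplicativity of $\|\cdot\|_\chi$) is exactly the standard route being alluded to, with all steps sound.
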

The properties listed above for $\|\cdot\|_\chi$ are rather standard (see e.g. \cite{Nutter}); the properties of $\|\cdot\|_f$ and the inequality $\|L\|_\chi \leq \|L\|_f$ may easily be deduced by the reader. 

Given a Banach space $\XX$ and a bounded set $\mathsf{A} \subseteq \BofX$, let us define $\mathsf{A}^n := \{A_{i_n}\cdots A_{i_1} \colon A_i \in 
\mathsf{A}\}$ for each $n \in\NN$. We consider the following four spectral radii:
\[\hat\varrho(\mathsf{A}) := \lim_{n \to \infty} \sup_{A \in \mathsf{A}^n}   \|A\|^{1/n},\qquad\varrho_\chi(\mathsf{A}) := \lim_{n \to \infty} \sup_{A \in \mathsf{A}^n}   \|A\|_\chi^{1/n},\]
\[\varrho_f(\mathsf{A}) := \lim_{n \to \infty} \sup_{A \in \mathsf{A}^n}   \|A\|_f^{1/n},\qquad \varrho_r(\mathsf{A}) := \limsup_{n \to \infty} \sup_{A \in \mathsf{A}^n} \rho(A)^{1/n}.\]
By submultiplicativity it follows that the limit in each of the definitions of $\hat\varrho$, $\varrho_\chi$ and $\varrho_f$ exists and is also equal to the infimum over $n \in \mathbb{N}$ of the same quantity.

In this article we establish the following basic relationships between the spectral radii $\hat\varrho$, $\varrho_r$, $\varrho_\chi$ and $\varrho_f$:
\begin{theorem}\label{Boq}
Let $\XX$ be a Banach space and let $\mathsf{A} \subseteq \BofX$ be precompact and nonempty. Then
\begin{equation}\label{GBWF}
\hat\varrho(\mathsf{A})=\max\{\varrho_\chi(\mathsf{A}),\varrho_r(\mathsf{A})\}
\end{equation}
and
\begin{equation}\label{HE}
\varrho_\chi(\mathsf{A})=\varrho_f(\mathsf{A}).
\end{equation}
\end{theorem}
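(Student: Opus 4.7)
The reverse inequalities $\hat\varrho(\mathsf{A}) \leq \max\{\varrho_\chi(\mathsf{A}),\varrho_r(\mathsf{A})\}$ and $\varrho_f(\mathsf{A})\leq\varrho_\chi(\mathsf{A})$ are the content; the easy direction follows from Proposition \ref{basic} together with $\rho(L)\leq\|L\|$. My plan is to cast these in dynamical language and attack them through a Banach-space multiplicative ergodic theorem. Replace $\mathsf{A}$ by its compact closure $\overline{\mathsf{A}}$ (standard continuity and upper-semicontinuity arguments show this does not affect any of the relevant quantities), form the shift space $\Omega:=\overline{\mathsf{A}}^{\NN}$ with shift $\sigma$, and consider the driving cocycle $A(\omega):=\omega_1$ with iterates $A^{(n)}(\omega):=A(\sigma^{n-1}\omega)\cdots A(\omega)$. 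The first step is to establish a variational principle
\[
\log\hat\varrho(\mathsf{A})=\sup_{\mu}\lambda_1(\mu),\qquad \lambda_1(\mu):=\lim_{n\to\infty}\tfrac{1}{n}\log\|A^{(n)}(\omega)\|\ \mu\text{-a.e.},
\]
the supremum being taken over $\sigma$-ergodic Borel probability measures on $\Omega$. This is obtained by choosing near-optimal words realising $\hat\varrho(\mathsf{A})$, extracting a $\sigma$-invariant measure via Krylov--Bogolyubov, passing to an ergodic component, and invoking Kingman's subadditive ergodic theorem.

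For a fixed ergodic $\mu$, Kingman's theorem applied to the submultiplicative seminorm $\|\cdot\|_\chi$ also yields a constant Kuratowski Lyapunov exponent $\lambda_\chi(\mu):=\lim_n\tfrac{1}{n}\log\|A^{(n)}(\omega)\|_\chi$, satisfying $\lambda_\chi(\mu)\leq\log\varrho_\chi(\mathsf{A})$. I would then invoke a Banach-space MET (in the spirit of Thieullen or Lian--Lu) using $\|\cdot\|_\chi$ as the index of compactness: this furnishes a measurable $A$-equivariant splitting $\XX=E_1(\omega)\oplus\cdots\oplus E_k(\omega)\oplus F(\omega)$ in which each $E_j(\omega)$ is finite-dimensional, carries a distinct Lyapunov exponent $\lambda_j(\mu)>\lambda_\chi(\mu)$, and all expansion on $F(\omega)$ occurs at rate $\leq\lambda_\chi(\mu)$. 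Either the exceptional list is empty, forcing $\lambda_1(\mu)\leq\lambda_\chi(\mu)\leq\log\varrho_\chi(\mathsf{A})$, or $\lambda_1(\mu)>\lambda_\chi(\mu)$ is realised on the finite-dimensional bundle $E_1$.

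In the second alternative, $A$ restricted to $E_1$ is a measurable matrix cocycle with top Lyapunov exponent $\lambda_1(\mu)$, and I would combine the classical finite-dimensional Berger--Wang formula (Theorem \ref{BuggerWank}) with a closing-lemma or approximation argument to extract genuine finite products of elements of $\overline{\mathsf{A}}$ whose spectral radii grow at rate $e^{n\lambda_1(\mu)}$; this passage from a matrix cocycle on an equivariant subbundle to honest spectral radii of words in $\overline{\mathsf{A}}$ is the main technical obstacle, and is where the precompactness hypothesis is crucial. Feeding the resulting inequality $\lambda_1(\mu)\leq\log\varrho_r(\mathsf{A})$ back into the variational principle yields \eqref{GBWF}. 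Finally, for \eqref{HE}, the same splitting provides a measurable finite-rank projection $\Pi(\omega)$ onto $\bigoplus_j E_j(\omega)$ satisfying $\|A^{(n)}(\omega)(I-\Pi(\omega))\|\leq e^{n(\lambda_\chi(\mu)+\varepsilon)}$ eventually; since $A^{(n)}(\omega)\Pi(\omega)$ has uniformly bounded rank, this bounds $\|A^{(n)}(\omega)\|_f$ by the same quantity for large $n$, and an analogous variational principle for $\varrho_f(\mathsf{A})$ then delivers $\varrho_f(\mathsf{A})\leq\varrho_\chi(\mathsf{A})$.
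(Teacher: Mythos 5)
Your overall architecture (variational principle over shift-invariant measures, Banach-space MET splitting, finite-rank projection for $\varrho_f=\varrho_\chi$) matches the paper's, but the step you yourself flag as ``the main technical obstacle'' is genuinely missing, and the tool you propose for it does not do the job. Once you are in the alternative $\lambda_1(\mu)>\lambda_\chi(\mu)$, the restriction of the cocycle to the fast bundle is a map between \emph{varying} fibres $E_1(\omega)\to E_1(\sigma^n\omega)$; the finite-dimensional Berger--Wang formula (Theorem \ref{BuggerWank}) concerns arbitrary products drawn from a fixed matrix set and says nothing about orbit-constrained products of a cocycle, so it cannot be ``combined with a closing lemma'' in any direct way -- the statement you actually need is precisely the cocycle spectral-radius result that constitutes Theorem \ref{Boq2} (equivalently the Avila--Bochi/Cohen-type statement of Theorem \ref{Boq3}). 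The paper proves this by a concrete mechanism which your sketch does not supply: using Thieullen's splitting it forms the projections $P(x),Q(x)$ onto $F_1(x)$ along $G_1(x)$, the cone $K(x,1)=\{u:\|P(x)u\|\geq\|Q(x)u\|\}$, and then Poincar\'e recurrence applied to the $\mu$-continuous map $x\mapsto P(x)$ (Lemma \ref{lemmer}) to find infinitely many return times $n$ with $\|P(x)-P(T^nx)\|$ small, so that $\mathcal{A}(x,n)K(x,1)\subseteq K(T^nx,\delta)\subseteq K(x,1)$ while $\mathcal{A}(x,n)$ expands the cone at rate $e^{n(\lambda-3\varepsilon)}$; cone invariance plus expansion then bounds $\rho(\mathcal{A}(x,n))$ from below for these $n$. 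Your ``closing-lemma or approximation argument'' is exactly this argument, and without it the proof of \eqref{GBWF} is incomplete.

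Two smaller but real gaps: (i) the Banach-space METs you invoke require an injective cocycle (and an invertible base) to produce an equivariant \emph{splitting} rather than a mere filtration, and your finite-rank projection $\Pi(\omega)$ needs that splitting; the paper handles this with the Ma\~n\'e--Thieullen embedding $\mathfrak{E}\colon\mathcal{B}(\XX)\to\mathcal{B}(\XX^{\NN})$, which preserves all four spectral radii while making every operator injective, and works on the two-sided shift $\mathsf{A}^{\mathbb{Z}}$ -- your proposal omits this reduction. (ii) The reduction from $\mathsf{A}$ to $\overline{\mathsf{A}}$ is not a matter of ``standard upper-semicontinuity'': upper semicontinuity of $\rho$ points the wrong way when you must pass a large spectral radius from a product of elements of $\overline{\mathsf{A}}$ back to a product of elements of $\mathsf{A}$. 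The paper's argument uses that the relevant products $B$ satisfy $\rho(B)>\|B\|_\chi$, hence are continuity points of $\rho$, and this is only available in the regime $\hat\varrho(\overline{\mathsf{A}})>\varrho_\chi(\overline{\mathsf{A}})$, so the reduction has to be interleaved with the dichotomy rather than dispatched at the outset. Also, when the exceptional spectrum is infinite ($p=\infty$ in Theorem \ref{Theelun}) you must truncate at a finite $k$ with $\lambda_{k+1}<\lambda_\chi(\mu)+\varepsilon$ before forming $\Pi$; projecting onto all of $\bigoplus_j E_j(\omega)$ need not be finite rank. These last points are fixable, but the cone/recurrence mechanism in the first paragraph is the heart of the proof and is absent from your proposal.
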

The equation \eqref{GBWF} has been termed the \emph{Generalized Berger-Wang formula} by V. S. Shulman and Yu. V. Turovski\u{\i} \cite{ST1}, who gave a proof conditional on various additional hypotheses. A more general version was established in \cite{ST2}, with an unconditional proof recently being given in \cite{ST3}. The relation \eqref{HE} does not appear to have been known prior to the present article, although a similar relation is known in cases where $\XX$ satisfies a strong version of the compact approximation property \cite[p.419]{ST1}.

As well as strictly generalising Theorem \ref{BuggerWank}, Theorem \ref{Boq} yields simple proofs of several results on invariant subspaces of families of operators. In particular it may be applied to give a short proof of a celebrated theorem of Turovski\u{\i} \cite{Tu} which states that any semigroup of compact quasinilpotent operators acting on a Banach space has a nontrivial invariant subspace. Proofs via \eqref{GBWF} of this and other related results may be found in \cite{ST1,ST3}. 

It should be noted that \eqref{GBWF} may fail to hold if we assume only that  $\mathsf{A}$ is bounded, and also that there exist Banach spaces $\XX$ and precompact sets $\mathsf{A} \subseteq \BofX$ such that $\varrho_r(\mathsf{A})<\varrho_\chi(\mathsf{A})$. Specifically, in the case where $\XX$ is a separable infinite-dimensional Hilbert space, M.-H. Shih \emph{et al.} give an example in \cite{SWP} of a closed bounded noncompact set $\mathsf{A} \subseteq \BofX$ which satisfies $\varrho_\chi(\mathsf{A})=\varrho_r(\mathsf{A})=0$ and $\hat\varrho(\mathsf{A})=1$. Also in the context of a separable Hilbert space, an example was given by P. Rosenthal and A. So{\l}tysiak in \cite{RS} of a two-element set $\mathsf{A} \subseteq \mathcal{B}(\XX)$ such that $\varrho_r(\mathsf{A})<\hat\varrho(\mathsf{A})$. It is not clear to the author whether \eqref{HE} can fail to hold when $\mathsf{A}$ is bounded but not precompact.

We now describe the ergodic-theoretic results which are used to deduce Theorem \ref{Boq}. Let $T$ be a measure-preserving transformation of a probability space $(X, \mathcal{F},\mu)$ and $\XX$ a Banach space. A \emph{linear cocycle} over $T$ is a measurable function $\mathcal{A} \colon X \times \mathbb{N} \to \mathcal{B}(\XX)$ which satisfies the relation
\[\mathcal{A}(x,n+m) = \mathcal{A}(T^mx,n)\mathcal{A}(x,m)\]
for every $n, m \in \mathbb{N}$ and $\mu$-almost-every $x \in X$. If $X$ is a topological space and $\mathcal{F}$ its Borel $\sigma$-algebra we say that $\mathcal{A}$ is a continuous linear cocycle if the map $x \mapsto \mathcal{A}(x,k)$ is continuous for every $k \in \mathbb{N}$. When discussing linear cocycles we shall find it useful to adopt the conventions $\log 0 := -\infty$ and $\log^+ (x) :=\max\{0,\log x\}$ for $x \geq 0$.

Theorem \ref{Boq} is derived from the following technical result:
\begin{theorem}\label{Boq2}
Let $T \colon X \to X$ be a homeomorphism of a compact metric space, $\mu$ a $T$-invariant Borel probability measure on $X$, and $\XX$ a Banach space. Suppose that $\mathcal{A} \colon X \times \mathbb{N} \to \mathcal{B}(\XX)$ is a continuous cocycle such that $\mathcal{A}(x,n)$ is an injective operator for every $(x,n) \in X \times \mathbb{N}$. Then there exists a $T$-invariant Borel set $\Lambda \subseteq X$ satisfying $\mu(\Lambda)=1$ with the following properties. If $x \in \Lambda$ then the limits
\[\lambda(x) := \lim_{n \to \infty} \frac{1}{n} \log \|\mathcal{A}(x,n)\|,\]
\[\chi(x) := \lim_{n \to \infty} \frac{1}{n} \log \|\mathcal{A}(x,n)\|_\chi\]
exist,
\begin{equation}\label{anzab}
\lim_{n \to \infty} \frac{1}{n}\log \|\mathcal{A}(x,n)\|_f = \chi(x),
\end{equation}
and if additionally $\chi(x)<\lambda(x)$ then
\begin{equation}\label{bwanza}
\limsup_{n \to \infty} \frac{1}{n} \log \rho(\mathcal{A}(x,n)) = \lambda(x).\end{equation}
\end{theorem}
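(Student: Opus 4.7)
The plan is to combine Kingman's subadditive ergodic theorem with a Banach-space multiplicative ergodic theorem (MET)---say in the version of Thieullen or Lian-Lu---and, for the spectral-radius identity \eqref{bwanza}, a topological recurrence argument exploiting continuity of $\mathcal{A}$ and compactness of $X$. By Proposition \ref{basic} the sequences $n \mapsto \log \|\mathcal{A}(\cdot,n)\|$, $n \mapsto \log \|\mathcal{A}(\cdot,n)\|_\chi$, $n \mapsto \log \|\mathcal{A}(\cdot,n)\|_f$ are subadditive along $T$-orbits and grow at most linearly (by compactness of $X$ and continuity of $\mathcal{A}(\cdot,1)$), so Kingman's theorem yields a common $T$-invariant Borel set of full measure on which $\lambda(x)$, $\chi(x)$, and $\varphi(x) := \lim n^{-1}\log\|\mathcal{A}(x,n)\|_f$ all exist, with $\chi(x) \le \varphi(x) \le \lambda(x)$ by Proposition \ref{basic}.

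For \eqref{anzab}, I would fix $\epsilon > 0$ and apply the MET to produce a measurable $\mathcal{A}$-equivariant splitting $\XX = E_\epsilon(x) \oplus F_\epsilon(x)$ in which $E_\epsilon(x)$ is finite-dimensional and $\limsup_n n^{-1}\log \|\mathcal{A}(x,n)|_{F_\epsilon(x)}\| \le \chi(x) + \epsilon$ almost surely. Writing $P_\epsilon(x)$ for the projection onto $E_\epsilon(x)$ along $F_\epsilon(x)$, equivariance of the splitting forces
\[
(I - P_\epsilon(T^n x))\,\mathcal{A}(x,n) = \mathcal{A}(x,n)(I - P_\epsilon(x)),
\]
so the finite-rank operator $K_n(x) := P_\epsilon(T^n x)\mathcal{A}(x,n)$ satisfies $\|\mathcal{A}(x,n) - K_n(x)\| \le \|\mathcal{A}(x,n)|_{F_\epsilon(x)}\|\cdot\|I - P_\epsilon(x)\| \le C(x,\epsilon)\,e^{n(\chi(x)+\epsilon)}$ for large $n$. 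Hence $\varphi(x) \le \chi(x)+\epsilon$, and letting $\epsilon \downarrow 0$ gives equality.

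For \eqref{bwanza}, assume $\chi(x) < \lambda(x)$ and pick $\epsilon>0$ with $\chi(x)+\epsilon < \lambda(x)$. The same MET splitting now supplies a finite-dimensional equivariant fast bundle $E(x)$ on which the restricted cocycle $B_n(x) := \mathcal{A}(x,n)|_{E(x)}\colon E(x) \to E(T^n x)$ has top Lyapunov exponent $\lambda(x)$. Because $B_n(x)$ maps between distinct fibres, a closing step is needed before spectral radii make sense, which I would supply as follows: Lusin's theorem applied to the measurable data $x \mapsto (E(x), P(x))$ yields a compact $X_0 \subseteq X$ of measure arbitrarily close to $1$ on which this data is uniformly continuous; Poincar\'e recurrence in $X_0$ extracts $n_k \to \infty$ with $T^{n_k}x \in X_0$ and $T^{n_k}x \to x$, so by continuity $E(T^{n_k}x) \to E(x)$, and $P(x)B_{n_k}(x)$ is an $o(1)$-perturbation of an endomorphism of $E(x)$. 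The classical finite-dimensional Berger-Wang formula (Theorem \ref{BuggerWank}) applied to the trivialized matrix cocycle over $X_0$, together with upper semicontinuity of the spectral radius, then yields $\rho(\mathcal{A}(x,n_k))^{1/n_k} \ge e^{\lambda(x)-O(\epsilon)}$ infinitely often, and $\epsilon\downarrow 0$ completes the proof.

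The main obstacle is this last closing step: the MET splitting is only measurable whereas topological recurrence is a continuous notion, so reconciling the two demands a careful Lusin-plus-Poincar\'e argument together with precise control over how the small mismatch between $E(x)$ and $E(T^{n_k}x)$ affects the spectral radius of the approximating matrix. The injectivity hypothesis on each $\mathcal{A}(x,n)$ is what makes the MET splitting well defined and keeps the restricted cocycle $B$ invertible, so that $\rho(B_n(x))$ actually tracks the top Lyapunov exponent rather than collapsing to zero.
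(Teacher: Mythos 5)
Your treatment of \eqref{anzab} is essentially the paper's argument: both use the Banach-space multiplicative ergodic theorem (Thieullen) to produce a finite-dimensional fast subspace with an equivariant complement on which the cocycle grows at rate at most $\chi(x)+\epsilon$, and both estimate $\|\mathcal{A}(x,n)\|_f$ by the norm of $\mathcal{A}(x,n)(I-P_\epsilon(x))$, where $P_\epsilon(x)$ is the finite-rank projection associated with the splitting. (You should still record the reduction to ergodic $\mu$ and the degenerate case $\chi(x)=\lambda(x)$, where no splitting exists and Proposition \ref{basic} already gives \eqref{anzab}, but these are routine.)

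The proof of \eqref{bwanza}, however, has a genuine gap at its decisive step. After the Lusin-plus-Poincar\'e step you have times $n_k$ with the splitting at $T^{n_k}x$ close to the splitting at $x$, and you know $\|\mathcal{A}(x,n_k)|_{E(x)}\|\approx e^{n_k\lambda(x)}$; but the passage from this to $\rho(\mathcal{A}(x,n_k))\ge e^{n_k(\lambda(x)-O(\epsilon))}$ is precisely the hard point of the theorem, and neither tool you invoke delivers it. A single matrix (or the compression $P(x)\mathcal{A}(x,n_k)|_{E(x)}$) of enormous norm can have tiny spectral radius, so norm growth alone proves nothing; upper semicontinuity of the spectral radius gives bounds in the wrong direction; and since $E(x)$ is not $\mathcal{A}(x,n_k)$-invariant, the spectral radius of a compression to $E(x)$ neither bounds nor is bounded by $\rho(\mathcal{A}(x,n_k))$. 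The finite-dimensional Berger--Wang formula is likewise inapplicable: it concerns the supremum of spectral radii over \emph{all} products drawn from a set of matrices, so at best it produces some long word in the trivialized generators over $X_0$ with large spectral radius, and such a word need not correspond to any orbit segment, let alone to the specific products $\mathcal{A}(x,n_k)$ along the orbit of your fixed $x$, which is what the pointwise statement requires (in the paper the logic runs the other way: the generalized Berger--Wang formula is \emph{deduced from} Theorem \ref{Boq2}). What replaces this in the paper is an invariant-cone argument in the style of Kalinin: with $P(x)$ the projection onto $F_1(x)$ along $G_1(x)$ and $Q=I-P$, one sets $K(x,\delta)=\{u\in\XX\colon\|P(x)u\|\ge\delta^{-1}\|Q(x)u\|\}$, shows that for all large $n$ the operator $\mathcal{A}(x,n)$ maps $K(x,1)$ into $K(T^nx,\delta)$ while expanding every vector of $K(x,1)$ by at least $e^{n(\lambda-3\epsilon)}$, and then uses the recurrence times of Lemma \ref{lemmer} to get $K(T^nx,\delta)\subseteq K(x,1)$, so that for infinitely many $n$ the single operator $\mathcal{A}(x,n)$ preserves $K(x,1)$ and expands on it; iterating on one cone vector then gives $\rho(\mathcal{A}(x,n))\ge e^{n(\lambda-3\epsilon)}$ directly. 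Your sketch needs this (or an equivalent approximate-invariance-plus-domination mechanism) to close; as written, the final inequality does not follow from the ingredients you cite.
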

%Theorem \ref{Boq} may then be deduced using a suitable subadditive ergodic theorem; see \S3.
For $d \in \NN$ let $\Mat_d(\RR)$ denote the vector space of all $d \times d$ real matrices. The ideas used to prove Theorem \ref{Boq2} may also be applied to obtain the following result which answers a question of J. E. Cohen \cite[p.329]{C}:\begin{theorem}\label{Boq3}
Let $T$ be a measure-preserving transformation of a probability space $(X,\mathcal{F},\mu)$ and let $\mathcal{A} \colon X \times \mathbb{N} \to \Mat_d(\mathbb{R})$ be a measurable linear cocycle such that $\int\log^+ \|\mathcal{A}(x,1)\|\,d\mu(x)<\infty$. Let $Z$ denote the set of all $x \in X$ such that $\lim_{n \to \infty} \|\mathcal{A}(x,n)\|^{1/n}$ exists. Then
\begin{equation}\label{bwan}
\mu\left(\left\{x \in Z \colon \limsup_{n \to \infty} \frac{1}{n} \log \rho(\mathcal{A}(x,n)) = \lim_{n \to \infty} \frac{1}{n} \log\|\mathcal{A}(x,n)\|
\right\}\right)=1.\end{equation}
\end{theorem}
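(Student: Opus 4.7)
The plan is to invoke Kingman's subadditive ergodic theorem, the ergodic decomposition, and Oseledec's multiplicative ergodic theorem (MET), together with a spectral perturbation argument. First, by submultiplicativity the sequence $n\mapsto\log\|\mathcal{A}(x,n)\|$ is subadditive, and integrable by hypothesis, so Kingman's theorem gives that $\lambda(x):=\lim_n \tfrac{1}{n}\log\|\mathcal{A}(x,n)\|$ exists in $[-\infty,\infty)$ for $\mu$-a.e.\ $x$, whence $\mu(Z)=1$. Since $\lambda$ is $T$-invariant, passing to the ergodic decomposition reduces matters to $T$-ergodic $\mu$ with $\lambda$ constant. The case $\lambda=-\infty$ is immediate from $\rho\le\|\cdot\|$, so one assumes $\lambda\in\RR$.

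Next, apply Oseledec's MET (in its measurable, not-necessarily-invertible form) to obtain Lyapunov exponents $\lambda=\chi_1>\chi_2>\cdots>\chi_r\ge-\infty$ with multiplicities $d_1,\ldots,d_r$, and a measurable $T$-equivariant filtration $\RR^d=V_1(x)\supsetneq V_2(x)\supsetneq\cdots\supsetneq V_{r+1}(x)=\{0\}$ with $\mathcal{A}(x,1)V_i(x)\subseteq V_i(Tx)$. Using the inequality $\rho(\wedge^{d_1}A)\le\rho(A)^{d_1}$ and the fact that $\wedge^{d_1}\mathcal{A}$ has simple top Lyapunov exponent $d_1\chi_1$, reduce to the case where the top exponent $\chi_1$ is simple, so $\dim V_1(x)/V_2(x)=1$. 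The induced quotient cocycle is then multiplication by a nonzero scalar $c(x)$, and Birkhoff's theorem yields $\tfrac{1}{n}\log|\tilde{\mathcal{A}}(x,n)|\to\int\log|c|\,d\mu=\chi_1$ for $\mu$-a.e.\ $x$.

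The crux is to convert this asymptotic scalar ``eigenvalue'' on the quotient into an actual eigenvalue of the matrix $\mathcal{A}(x,n)$. Combine Lusin's theorem (to restrict to a set on which $x\mapsto V_2(x)$ is continuous in the Grassmannian metric) with Poincar\'e recurrence, producing for $\mu$-a.e.\ $x$ a return sequence $n_k\to\infty$ along which $\delta_k:=d(V_2(T^{n_k}x),V_2(x))\to 0$. Fix a line $\ell_0$ complementary to $V_2(x)$ and block-decompose $\mathcal{A}(x,n_k)=\bigl(\begin{smallmatrix}\alpha_k & B_k \\ C_k & D_k\end{smallmatrix}\bigr)$ relative to $\ell_0\oplus V_2(x)$. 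The inclusion $\mathcal{A}(x,n_k)V_2(x)\subseteq V_2(T^{n_k}x)$ forces $\|B_k\|\le\delta_k e^{n_k\chi_2+o(n_k)}$, while $\|C_k\|\le e^{n_k\chi_1+o(n_k)}$, $\|D_k\|\le e^{n_k\chi_2+o(n_k)}$ (since the top Lyapunov exponent of $\mathcal{A}|_{V_2}$ is $\chi_2$), and $|\alpha_k|\sim e^{n_k\chi_1}$. A Schur-complement calculation then produces an eigenvalue $\nu_k$ of $\mathcal{A}(x,n_k)$ satisfying $|\nu_k-\alpha_k|\le\|B_k\|\cdot\|(\alpha_k I-D_k)^{-1}\|\cdot\|C_k\|\lesssim\delta_k e^{-n_k(\chi_1-\chi_2)+o(n_k)}|\alpha_k|$, whence $\tfrac{1}{n_k}\log|\nu_k|\to\chi_1$ and $\limsup_n\tfrac{1}{n}\log\rho(\mathcal{A}(x,n))\ge\chi_1=\lambda$ $\mu$-a.e. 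Combined with the trivial reverse inequality this gives \eqref{bwan}.

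The main obstacle is this perturbation step: the convergence $\delta_k\to0$ from Poincar\'e recurrence comes with no a priori rate, yet one must extract an eigenvalue of a matrix of norm $\sim e^{n_k\chi_1}$ within multiplicative error $o(1)$. The argument succeeds because the spectral gap $\chi_1-\chi_2>0$ provides an exponentially decaying factor $e^{-n_k(\chi_1-\chi_2)}$ in the Schur bound, which dominates any slow $\delta_k\to 0$.
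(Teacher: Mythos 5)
There is a genuine gap at the step where you assert $|\alpha_k|\sim e^{n_k\chi_1}$. Your splitting $\RR^d=\ell_0\oplus V_2(x)$ is fixed at $x$, while the cocycle only carries $V_2(x)$ into the \emph{moving} hyperplane $V_2(T^{n_k}x)$; so the lower bound on the $\ell_0$-component of $\mathcal{A}(x,n_k)u_0$ must be obtained by transferring from distance-to-$V_2(T^{n_k}x)$ to distance-to-$V_2(x)$, which costs an error of order $\delta_k\|\mathcal{A}(x,n_k)u_0\|\approx\delta_k e^{n_k(\chi_1+o(n_k)/n_k)}$ against a main term $\mathrm{dist}(\mathcal{A}(x,n_k)u_0,V_2(T^{n_k}x))\approx e^{n_k\chi_1-o(n_k)}$. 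Hence you need $\delta_k\leq e^{-o(n_k)}$, a threshold that shrinks with $n_k$ (the $o(n_k)$ terms come from the a.e.\ convergences in the MET and are unbounded in general), whereas Poincar\'e recurrence only provides infinitely many returns within any \emph{fixed} threshold, with no rate. The spectral-gap factor $e^{-n_k(\chi_1-\chi_2)}$ does rescue the Schur-complement estimate for $|\nu_k-\alpha_k|$, as you say, but it never enters the lower bound for $\alpha_k$: nothing in the one-sided filtration data prevents $\mathcal{A}(x,n_k)u_0$ from landing in, or extremely close to, $V_2(x)$ itself, making $\alpha_k$ small. Concretely, in dimension $2$ with $V_2(x)=\mathrm{span}(e_2)$ and $V_2(T^{n}x)=\mathrm{span}(e_2+\delta e_1)$, the matrix sending $e_1\mapsto t e_2$ and $e_2\mapsto s(e_2+\delta e_1)$ with $|t|\approx e^{n\chi_1-o(n)}/\delta\leq e^{n\chi_1+o(n)}$ and $|s|\approx e^{n\chi_2}$ satisfies every bound you invoke (norm, restriction to $V_2$, quotient expansion, $\delta$-closeness of the hyperplanes) yet has spectral radius about $e^{n(\chi_1+\chi_2)/2}$; so the ingredients you list cannot by themselves force an eigenvalue of modulus $e^{n(\chi_1-o(1))}$.

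The missing ingredient is an equivariant complement to $V_2$, and this is exactly how the paper proceeds: it passes to the natural extension so that $T$ is invertible, applies the two-sided Oseledets theorem to get a splitting $F_1(x)\oplus\cdots\oplus F_p(x)$ with equivariant projections $P(x)$, and then needs only $\|P(x)-P(T^nx)\|<\kappa$ for a \emph{fixed} $\kappa$ at the recurrence times; the conclusion is then extracted from an invariant-cone argument, estimating $\rho(\mathcal{A}(x,n))\geq\limsup_j\|\mathcal{A}(x,n)^jv\|^{1/j}$ for $v$ in a cone preserved by $\mathcal{A}(x,n)$, rather than by perturbing a single matrix. Your argument can be repaired along the same lines: replace $\ell_0$ by the equivariant fast space $F_1(x)$ from the two-sided theorem (including $F_1$ in your Lusin set), so that $\mathcal{A}(x,n_k)u_0$ lies exactly in $F_1(T^{n_k}x)$, a line within $\delta_k$ of $\ell_0$, and then a fixed smallness of $\delta_k$ suffices for your Schur step. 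Two smaller points: your identification of the quotient Birkhoff exponent with $\chi_1$ needs a short triangularization argument rather than being immediate, while the exterior-power reduction to a simple top exponent is correct (the paper avoids it because its cone sits over the full space $F_1(x)$).
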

A version of Theorem \ref{Boq3} was given by A. Avila and J. Bochi \cite{AB} in the case where $T$ is invertible and $\mathcal{A}(x,1) \in SL_2(\RR)$ for every $x \in X$. Avila and Bochi also give an example to show that $\lim_{n \to \infty} \frac{1}{n} \log\rho(\mathcal{A}(x,n))$ can fail to exist $\mu$-a.e. even when the dependence between $\mathcal{A}(x,1)$ and $\mathcal{A}(T^kx,1)$ is very weak.
\section{Proof of Theorems \ref{Boq2} and \ref{Boq3}}

The main ingredient of the proof of Theorem \ref{Boq2} is a multiplicative ergodic theorem for Banach spaces which was established by P. Thieullen \cite{T}, building on earlier work of R. Ma\~n\'e \cite{Mane} and D. Ruelle \cite{Ruelle2}. In order to state Thieullen's result we require the following definitions.

Given a Banach space $\XX$ we let $\mathcal{G}(\XX)$ denote the set of all closed subspaces $F \subseteq \XX$ for which there exists a closed subspace $G \subseteq \XX$ such that $\XX = F \oplus G$. In particular, $\mathcal{G}(\XX)$ contains all finite-dimensional subspaces of $\XX$ as well as all closed subspaces of finite codimension. Given any $F_0 \in \mathcal{G}(\XX)$ and any $G \in \mathcal{G}(\XX)$ such that $F_0 \oplus G = \XX$, define $U_{F_0,G}=\{ F \in \mathcal{G}(\XX) \colon  F \oplus G =\XX\}$. Define a map $\varphi_{F_0,G} \colon U_{F_0,G} \to \mathcal{B}(F_0,G)$ by identifying each $F \in U_{F_0,G}$ with the unique continuous linear map $F_0 \to G$ whose graph is $F$; that is, $\varphi_{F_0,G}(F)$ is the restriction to $F_0$ of the unique projection $\XX\to G$ having image $G$ and kernel $F$, this projection being continuous by the closed graph theorem. We now define a topology on $\mathcal{G}(\XX)$ by declaring each triple $(U_{F_0,G}, \varphi_{F_0,G},\mathcal{B}(F_0,G))$ to be a chart at $F_0$. The resulting topology has the property that $F_n \to F$ in $\mathcal{G}(\XX)$ if and only if $\varphi_{F,G}(F_n) \to 0 \in \mathcal{B}(F,G)$ for every $G \in \mathcal{G}(\XX)$ such that $F \oplus G = \XX$; see \cite[Appendix B]{T}.

Given topological spaces $\Omega_1, \Omega_2$ and a Borel measure $\mu$ on $\Omega_1$, we shall say that a map $f \colon \Omega_1 \to \Omega_2$ is \emph{$\mu$-continuous} if there exists a sequence of pairwise disjoint compact sets $K_n \subseteq X$ such that $\mu(\bigcup_n K_n)=1$ and the restriction of $f$ to $K_n$ is continuous for each $n$.

We now give a statement of Thieullen's theorem, restricted to the special case of a compact metric space $X$ and ergodic measure $\mu$. To simplify the statement let us write $\NN_p :=\{1,\ldots,p\}$ for $p \in \NN$ and $\NN_\infty:=\NN$. 
\begin{theorem}\label{Theelun}
Let $T \colon X \to X$ be a continuous homeomorphism of a compact metric space, let $\mu$ be an ergodic Borel probability measure on $X$, let $\XX$ be a Banach space, and let $\mathcal{A} \colon X \times\NN \to \BofX$ be a continuous cocycle. Suppose that $\mathcal{A}(x,n)$ is an injective operator for every $(x,n )\in X\times\NN$. By the subadditive ergodic theorem the limits $\chi := \lim \frac{1}{n} \log \|\mathcal{A}(x,n)\|_\chi$ and $\lambda: = \lim \frac{1}{n}\log \|\mathcal{A}(x,n)\|$ exist and are constant $\mu$-a.e. Suppose that $\chi<\lambda$. Then there exists a $T$-invariant Borel set $\Lambda \subseteq X$ which satisifies $\mu(\Lambda)=1$ such that the following properties hold.

There exist $p \in \NN \cup \{\infty\}$, a sequence of real numbers $\lambda=\lambda_1>\lambda_2>\ldots> \chi$ indexed in $\NN_p$, and two corresponding sequences of Borel-measurable $\mu$-continuous maps $F_1,F_2,\ldots \colon \Lambda \to \mathcal{G}(\XX)$,  $G_1,G_2,\ldots, \colon \Lambda \to \mathcal{G}(\XX)$ indexed in $\NN_p$, such that for every $x \in \Lambda$ and $k \in \NN_p$, $F_k(x)$ is finite-dimensional and
\[F_1(x) \oplus \cdots \oplus F_k(x) \oplus G_k(x) = \XX,\]
\[\mathcal{A}(x,n)F_k(x)=F_k(T^nx),\]
\[\mathcal{A}(x,n)G_k(x) = G_k(T^nx)\]
for every $n \in \NN$. For each $x \in \Lambda$ and $k \in \NN_p$ we have
\[\lim_{n \to \infty} \frac{1}{n}\log \frac{\|\mathcal{A}(x,n)v\|}{\|v\|} = \lambda_k\]
uniformly for $v \in F_k(x) \setminus \{0\}$. Similarly, if $k \in \NN$ with $k<p$ then for each $x \in \Lambda$,
\begin{equation}\label{erglehek}\lim_{n \to \infty} \frac{1}{n}\log \sup\left\{\frac{\|\mathcal{A}(x,n)v\|}{\|v\|} \colon  v \in G_k(x)\setminus\{0\}\right\} = \lambda_{k+1}.\end{equation}
If $k=p<\infty$ then the limit in \eqref{erglehek} is instead equal to $\chi$. If $p=\infty$ then $\lim_{i \to \infty}\lambda_i=\chi$.\end{theorem}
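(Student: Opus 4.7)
The plan is to follow the strategy of Thieullen, which extends the classical Oseledets argument from finite dimensions to the Banach space setting by peeling off finite-dimensional equivariant summands one exponent at a time. First, existence of the constants $\lambda$ and $\chi$ as $\mu$-a.e. limits follows from Kingman's subadditive ergodic theorem applied to the subadditive (by Proposition \ref{basic}) sequences $\log \|\mathcal{A}(x,n)\|$ and $\log \|\mathcal{A}(x,n)\|_\chi$; ergodicity of $\mu$ yields constancy. The hypothesis $\chi<\lambda$ is essential: it forces the non-compact asymptotic behaviour to grow strictly slower than the top rate, so only finitely many independent directions can grow faster than any $\chi'\in(\chi,\lambda)$.

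Second, for the construction of $F_1(x)$ corresponding to the top exponent $\lambda_1=\lambda$, fix $\varepsilon$ small enough that $\chi+2\varepsilon<\lambda-\varepsilon$. For each $n$ approximate $\mathcal{A}(x,n)$ by a compact operator with norm-deviation of order $\|\mathcal{A}(x,n)\|_\chi$, and look at the span of those singular directions of the approximant with singular values at least $e^{n(\lambda-\varepsilon)}$. The spectral gap between $\lambda-\varepsilon$ and $\chi+\varepsilon$ forces the dimension of this span to stabilise at some finite value $d_1$. A compactness argument in the Grassmannian $\mathcal{G}(\XX)$, carried out inside the chart pairs $(U_{F_0,G},\varphi_{F_0,G})$ described in the excerpt, extracts a limit subspace $F_1(x)$ of dimension $d_1$. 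Injectivity of $\mathcal{A}(x,n)$ delivers equivariance $\mathcal{A}(x,n)F_1(x)=F_1(T^nx)$, and a separate Oseledets-type estimate gives $\tfrac{1}{n}\log\|\mathcal{A}(x,n)v\| \to \lambda_1$ uniformly on $F_1(x)\setminus\{0\}$.

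Third, one iterates on the induced cocycle on the quotient Banach space $\XX/F_1(x)$; its top exponent is some $\lambda_2<\lambda_1$, and if $\lambda_2>\chi$ the previous step produces a finite-dimensional invariant subspace of the quotient which lifts back into $\XX$ as an equivariant complement of $F_1(x)$ inside $F_1(x)\oplus F_2(x)$. The process terminates with $p$ finite once the next candidate exponent drops to $\chi$, or continues indefinitely with $\lambda_k \to \chi$. In either case $G_k(x)$ is defined as the closed equivariant complement of $F_1(x)\oplus\cdots\oplus F_k(x)$, furnished with a bounded projection via the chart map $\varphi_{F_1(x)\oplus\cdots\oplus F_k(x),G_k(x)}$.

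The main obstacle is establishing simultaneously the $\mu$-continuity of the maps $F_k,G_k\colon \Lambda\to \mathcal{G}(\XX)$ and the \emph{uniform} convergence of $\tfrac{1}{n}\log(\|\mathcal{A}(x,n)v\|/\|v\|)$ to $\lambda_k$ on $F_k(x)\setminus\{0\}$ and to $\lambda_{k+1}$ (or $\chi$) on $G_k(x)\setminus\{0\}$. Borel measurability of the constructions follows from their realisation as chart-wise limits, and Lusin's theorem, in the form stated just before Theorem \ref{Theelun}, converts Borel measurability into $\mu$-continuity. For the uniform growth statements one equips $F_1(x)\oplus\cdots\oplus F_k(x)$ with an adapted Oseledets norm whose distortion against $\|\cdot\|$ is a tempered function of $x$; on any Lusin compactum this distortion is bounded, and finite-dimensional pointwise convergence upgrades to uniformity. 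Controlling the temperedness of the projection norms is the technically most delicate step, but it is forced by the strict spectral gaps $\lambda_k-\lambda_{k+1}>0$ and $\lambda_p-\chi>0$ together with the Birkhoff ergodic theorem applied to the logarithm of the projection norm.
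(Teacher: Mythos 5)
This statement is Thieullen's multiplicative ergodic theorem; the paper does not prove it but quotes it from \cite{T} (with a remark that the uniform-convergence clause is extracted from pp.~68--69 of that paper), so your attempt has to be measured against Thieullen's argument rather than anything in the present article. Your outline reproduces the broad shape of that argument (top exponent first, then iteration on an equivariant complement/quotient, Lusin's theorem for $\mu$-continuity, adapted norms and temperedness for uniformity), but the core construction step has two genuine gaps. First, you propose to ``approximate $\mathcal{A}(x,n)$ by a compact operator with norm-deviation of order $\|\mathcal{A}(x,n)\|_\chi$''. In a general Banach space the Hausdorff measure of noncompactness does \emph{not} control the distance to $\KofX$ (nor to finite-rank operators): $\|L\|_\chi \leq \|L\|_f$ always, but the reverse comparison is available only under (a strong form of) the compact approximation property --- precisely the situation the paper singles out when it notes that relation \eqref{HE} was previously known only under such hypotheses. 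So this step silently assumes an approximation property that the hypotheses do not grant, and at the cocycle level it essentially presupposes the asymptotic identity $\chi(x)=\lim\frac1n\log\|\mathcal{A}(x,n)\|_f$ that the paper derives \emph{from} Theorem \ref{Theelun}, not the other way round. Second, ``the span of those singular directions of the approximant with singular values at least $e^{n(\lambda-\varepsilon)}$'' is not defined outside Hilbert space: compact operators on a Banach space have no singular value decomposition, and no canonical finite-dimensional ``fast'' subspace attached to a threshold. Ruelle's Hilbert-space proof can argue this way; Mañé's and Thieullen's Banach-space proofs replace it by quite different machinery (volume/exterior-power estimates, the index of noncompactness of images of balls, and a graph-transform construction of the slow spaces $G_k$), and that substitution is exactly the hard part of the theorem.

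The remaining portions of your plan (induction on the quotient cocycle, termination when the next exponent hits $\chi$ or $\lambda_k\downarrow\chi$, Borel measurability upgraded to $\mu$-continuity by Lusin, and uniformity via tempered adapted norms) are consistent with how the actual proof goes, but as written they are a programme rather than a proof: the equivariance and boundedness of the projections onto $F_1(x)\oplus\cdots\oplus F_k(x)$ along $G_k(x)$, the identification of the quotient cocycle's top exponent with $\lambda_{k+1}$ (and, when $k=p<\infty$, with $\chi$), and the temperedness of the distortion are each nontrivial estimates in \cite{T} and are not supplied by the gap conditions alone. Given that the theorem is imported wholesale in the paper, the appropriate move is either to cite \cite{T} (as the author does) or to carry out the Mañé--Thieullen construction in full; the sketch as it stands would not survive the two objections above.
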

\emph{Remark}. The statement of uniform convergence in Theorem \ref{Theelun} is not announced explicitly in the statement of that theorem in Thieullen's paper. However, the corresponding statement is proved in \cite[p.68-69]{T}.

We may now begin the proof of Theorem \ref{Boq2}. Let $X, T, \XX$ and $\mathcal{A}$ be as in that theorem. We must show that the set of all $x \in X$ such that
\[ \limsup_{n \to \infty} \rho(\mathcal{A}(x,n))^{1/n} = \liminf_{n \to \infty} \|\mathcal{A}(x,n)\|^{1/n}= \limsup_{n \to \infty} \|\mathcal{A}(x,n)\|^{1/n}\]
and
\[\liminf_{n \to \infty} \|\mathcal{A}(x,n)\|^{1/n}_\chi= \limsup_{n \to \infty} \|\mathcal{A}(x,n)\|^{1/n}_f,\]
which is clearly Borel, has full measure for every $T$-invariant measure $\mu$. Using a suitable ergodic decomposition theorem it suffices to prove that this holds in cases where $\mu$ is ergodic, which assumption we make for the remainder of the section.

We begin by showing that $\lim \frac{1}{n}\log \|\mathcal{A}(x,n)\|_f = \lim \frac{1}{n}\log \|\mathcal{A}(x,n)\|_\chi$  $\mu$-a.e. Let $\lambda$, $\chi$ be as above; if $\lambda = \chi$ then the result follows directly from Proposition \ref{basic}, so we assume $\lambda>\chi$. Let $\Lambda \subseteq X$, $F_1,F_2,\ldots$ and $G_1,G_2,\ldots$ be as given by Theorem \ref{Theelun}. Given $\varepsilon>0$, take $k \geq 1$ such that for every $x \in \Lambda$ the limit in \eqref{erglehek} is bounded by $\chi+\varepsilon$. For each $x \in \Lambda$ define $F(x) := F_1(x) \oplus \cdots \oplus F_k(x)$ and let $P(x) \in \mathcal{B}(\XX)$ be the unique projection having image $F(x)$ and kernel $G_k(x)$. Note that for each $x\in\Lambda$ the boundedness of $P(x)$ is guaranteed by the closed graph theorem, and that the $\mu$-continuity of the maps $F_1,\ldots,F_k$, $G_k$ implies the $\mu$-continuity of $P$. Now take any $x \in \Lambda$. Since $P(x)$ has finite rank,
\begin{align*}
\|\mathcal{A}(x,n)\|_f &\leq \|\mathcal{A}(x,n) - \mathcal{A}(x,n)P(x)\|\\ &\leq (1+\|P(x)\|) \sup \left\{\frac{\|\mathcal{A}(x,n)v\|}{\|v\|} \colon v \in G_k(x) \setminus \{0\} \right\}\end{align*}
so that $\limsup \frac{1}{n} \log \|\mathcal{A}(x,n)\|_f \leq \chi+\varepsilon$ for each $x \in \Lambda$. By Proposition \ref{basic} we have $\liminf \frac{1}{n} \log \|\mathcal{A}(x,n)\|_f \geq \chi $ and the result follows.

We now proceed to the second part of Theorem \ref{Boq2}. Our approach is suggested by recent work of B. Kalinin \cite{K}; this line of argument is also applied by the author in \cite{M}. For each $x \in \Lambda$ we take $V(x) = F_1(x)$ and $W(x) = G_1(x)$, let $P(x) \in \BofX$ be the unique projection having image $V(x)$ and kernel $W(x)$, and define $Q(x) = I - P(x)$. Clearly $P$ and $Q$ are well-defined and $\mu$-continuous as before. Let $x \in  \Lambda$ and $n \in \mathbb{N}$; if $v \in V(x)$ and $w \in W(x)$ then clearly
\[P(T^nx)\mathcal{A}(x,n)(v+w) = \mathcal{A}(x,n)v = \mathcal{A}(x,n)P(x)(v+w),\]
and since $V(x)\oplus W(x) = \XX$ it follows that $P(T^nx)\mathcal{A}(x,n) = \mathcal{A}(x,n)P(x)$ and $Q(T^nx)\mathcal{A}(x,n) = \mathcal{A}(x,n)Q(x)$. We require the following lemma:
\begin{lemma}\label{lemmer}
There exists a set $\tilde \Lambda \subseteq \Lambda$ with $T\tilde\Lambda \subseteq \tilde\Lambda$ and $\mu(\tilde\Lambda)=1$ such that for every $x \in \tilde \Lambda$,\[\liminf_{n \to \infty} \|P(x) - P(T^nx)\| =0.\]
\end{lemma}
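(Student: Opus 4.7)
The strategy is to combine the $\mu$-continuity of $P$ with a strengthened form of the Poincaré recurrence theorem, so that the orbit of $\mu$-a.e.\ point returns near itself while staying on a set of continuity.

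Since $P \colon \Lambda \to \mathcal{B}(\XX)$ is $\mu$-continuous, I can fix a sequence of pairwise disjoint compact sets $K_1,K_2,\ldots \subseteq \Lambda$ such that $\mu(\bigcup_m K_m)=1$ and $P\restriction K_m$ is continuous for each $m$. The goal then reduces to showing that for every $m$ and $\mu$-almost every $x \in K_m$ there exists a sequence $n_k \to \infty$ with $T^{n_k}x \in K_m$ and $T^{n_k}x \to x$; continuity of $P\restriction K_m$ then forces $P(T^{n_k}x) \to P(x)$ and hence $\liminf_n \|P(x)-P(T^nx)\|=0$.

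To produce such a sequence, fix $m$ and $\ell \in \NN$ and cover the compact set $K_m$ by finitely many open balls $B_1,\ldots,B_{N(\ell)}$ of diameter $1/\ell$. For each $j$ the Poincaré recurrence theorem applied to the set $K_m \cap B_j$ (assuming it has positive measure; otherwise it is negligible) produces a full-measure subset of $K_m \cap B_j$ whose points return to $K_m \cap B_j$ infinitely often under $T$. Taking the union of these subsets over $j$ yields a full-measure subset $\Lambda_{m,\ell} \subseteq K_m$ such that every $x \in \Lambda_{m,\ell}$ admits infinitely many $n$ with $T^nx \in K_m$ and $d(T^nx,x)<1/\ell$. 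Intersecting over $\ell$ and then taking the union $\tilde\Lambda_0 := \bigcup_m \bigcap_\ell \Lambda_{m,\ell}$ gives a full-measure set on which the required recurrent subsequence exists; continuity of $P$ along each $K_m$ completes the conclusion.

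Finally, to arrange forward invariance, I set $\tilde\Lambda := \bigcap_{k \geq 0} T^{-k}\tilde\Lambda_0$. Since $T$ preserves $\mu$, each $T^{-k}\tilde\Lambda_0$ has full measure, so $\mu(\tilde\Lambda)=1$; and by construction $T\tilde\Lambda \subseteq \tilde\Lambda$, and the recurrence property of $\tilde\Lambda_0$ is inherited by every point of $\tilde\Lambda$.

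The only genuinely non-routine step is the refined recurrence in step two: ordinary Poincaré recurrence only guarantees return to the set $K_m$, not to a small neighbourhood of the starting point within $K_m$. The covering-by-small-balls device is what upgrades ``returns to $K_m$'' to ``returns to $K_m$ near $x$,'' which is precisely what is needed to invoke continuity of $P\restriction K_m$.
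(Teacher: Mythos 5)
Your argument is correct. It follows the same basic skeleton as the paper's proof -- exhaust the domain by the compact pieces $K_m$ coming from $\mu$-continuity, apply the Poincar\'e recurrence theorem to countably many small cells, and finally pass to $\bigcap_{k\geq 0}T^{-k}(\cdot)$ to force forward invariance -- but the covering step is carried out on the other side of the map $P$. The paper observes that $\mathcal{Z}=\bigcup_m P(K_m)$ is $\sigma$-compact, hence a separable subset of $\mathcal{B}(\XX)$, covers $\mathcal{Z}$ by countably many balls of radius $1/r$, and applies recurrence to the preimages $C_{n,r}=\{x\colon P(x)\in B_{1/r}(L_n)\}$; closeness of $P(x)$ and $P(T^kx)$ is then automatic because both lie in the same ball of the target. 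You instead cover each compact $K_m$ by finitely many small balls of the compact metric space $X$, apply recurrence to $K_m\cap B_j$, and extract a subsequence $T^{n_k}x\to x$ with $T^{n_k}x\in K_m$, after which continuity of $P\restriction K_m$ at $x$ gives $P(T^{n_k}x)\to P(x)$. Both routes are sound. Your source-side version uses the standing hypothesis that $X$ is a compact metric space and genuinely needs the continuity of $P$ on each piece at the point itself; the paper's target-side version uses no topology on $X$ at all, only essential separability of the range of $P$, which is why the author can later remark that for matrix cocycles over an abstract measure-preserving system the analogous lemma holds without any $\mu$-continuity (since $\Mat_d(\RR)$ is separable). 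In the present setting the extra hypotheses you invoke are available, so nothing is lost; your refinement of plain recurrence into ``return to $K_m$ near $x$'' via the finite covering is exactly the right substitute for the paper's covering of the image.
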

\begin{proof}
It suffices to show that for each $r>0$, the set
\[\Lambda_r := \left\{x \in \Lambda \colon \liminf_{n \to \infty} \|P(x) - P(T^nx)\| \leq 2/r\right\}\]
has full measure, since we may then define
\[\tilde\Lambda = \bigcap_{n=0}^\infty T^{-n}\left( \bigcap_{r=1}^\infty \Lambda_r\right)\]
and obtain the desired result. 

Let $K_n$ be a sequence of compact subsets of $X$ witnessing the $\mu$-continuity of the map $x \mapsto P(x)$. Define
\[\mathcal{Z} = \bigcup_{n \geq 1} \{P(x) \colon x \in K_n\} \subseteq \BofX.\]
Clearly $\mathcal{Z}$ is a countable union of compact sets, hence separable,
and $\mu(\{x \in \Lambda \colon P(x) \in \mathcal{Z}\})=1$. Let $r>0$. Since $\mathcal{Z}$ is separable we may choose a sequence $(L_n)_{n \geq 1}$ in $\mathcal{Z}$ such that $\{B_{1/r}(L_n) \colon n \geq 1\}$ covers $\mathcal{Z}$. For each $n>0$ let $C_{n,r} = \{x \in \Lambda \colon P(x) \in B_{1/r}(L_n)\}$. For each $n>0$ the Poincare recurrence theorem yields
\[\mu(\{x \in C_{n,r} \colon T^k x \in C_{n,r} \text{ for infinitely many }k \in \NN
\}) = \mu(C_{n,r})\]
and since $\mu(\bigcup_{n \geq 1} C_{n,r})=1$ it follows that $\mu(\Lambda_r)=1$ as required.\end{proof}
For each $\delta>0$ and $x \in \Lambda$ define a closed convex cone in $\XX$ by
\[K(x,\delta) := \left\{u \in \XX \colon \|P(x)u\| \geq \delta^{-1}\|Q(x) u\|\right\}.\]
To prove Theorem \ref{Boq2} it suffices to show that each $x \in \tilde \Lambda$ has the following two properties: firstly, for every sufficiently small $\varepsilon>0$ we have
\begin{equation}\label{pfgoal}\inf_{u \in K(x,1) \setminus \{0\}}\frac{\|\mathcal{A}(x,n)u\|}{\|u\|} \geq e^{n(\lambda-3\varepsilon)}\end{equation}
for all sufficiently large $n>0$; and secondly, for infinitely many $n>0$ we have $\mathcal{A}(x,n)K(x,1) \subseteq K(x,1)$. To see that this implies \eqref{bwanza}, note that if $\mathcal{A}(x,n)K(x,1)$ is contained in  $ K(x,1)$ and \eqref{pfgoal} holds, then taking any $v \in K(x,1) \setminus \{0\}$ we obtain
\[\frac{1}{n}\log \rho(\mathcal{A}(x,n)) \geq \liminf_{k \to \infty} \frac{1}{nk} \log \left\|(\mathcal{A}(x,n))^kv\right\| \geq \lambda-3\varepsilon.\]
Given $x \in \tilde\Lambda$ it follows that \eqref{bwanza} will be satisfied if the above conditions can be met for every $\varepsilon>0$. We therefore fix $x \in \tilde \Lambda$ for the remainder of the proof and proceed to establish these two properties. By Theorem \ref{Theelun} we have
\[\liminf_{n \to \infty} \frac{1}{n}\log \inf \left\{\frac{\|\mathcal{A}(x,n)v\|}{\|v\|} \colon v \in V(x) \setminus \{0\}\right\} =\lambda\]
and
\[\limsup_{n \to \infty} \frac{1}{n}\log \sup \left\{\frac{\|\mathcal{A}(x,n)v\|}{\|v\|} \colon v \in W(x) \setminus \{0\}\right\} = \nu\]
for some $\nu<\lambda$.  Choose any $\varepsilon>0 $ small enough that $3\varepsilon<\lambda - \nu$. If $n$ is taken large enough we have for each $u \in K(x,1)$
\begin{align*}\|P(T^nx)\mathcal{A}(x,n)u\|=\|\mathcal{A}(x,n)P(x)u\| &\geq e^{n(\lambda-\varepsilon)}\|P(x)u\|\\
&\geq \frac{1}{2}e^{n(\lambda-\varepsilon)}\|u\| \geq e^{n(\lambda-2\varepsilon)}\|u\|\end{align*}
and
\[\|Q(T^nx)\mathcal{A}(x,n)u\| = \|\mathcal{A}(x,n)Q(x)u\| \leq e^{n(\nu+\varepsilon)}\|Q(x)u\| \leq e^{n(\nu+\varepsilon)} \|Q(x)\|.\|u\|\]
where we have used the inequality $\|u\|=  \|(P(x)+Q(x))u \| \leq 2\|P(x)u\|$ which holds for all $u \in K(x,1)$. Combining the above expressions yields
\[\|Q(T^nx)\mathcal{A}(x,n)u\| \leq e^{n(\nu + 3\varepsilon - \lambda)}\|Q(x)\|.\|P(T^nx)\mathcal{A}(x,n)u\|\]
for every $u \in K(x,1)$, from which we conclude that for each $\delta>0$ we have $\mathcal{A}(x,n) K(x,1) \subseteq K(T^nx,\delta)$ for all large enough $n$. Additionally we obtain
\begin{align*}\|\mathcal{A}(x,n)u\| &\geq \|P(T^nx)\mathcal{A}(x,n)u\| - \|Q(T^nx)\mathcal{A}(x,n)u\|\\
 &\geq \left(e^{n(\lambda-2\varepsilon)} - e^{n(\nu+\varepsilon)}\|Q(x)\|\right)\|u\|\end{align*}
for every $u \in K(x,1)$, which gives \eqref{pfgoal} when $n$ is large enough.

To complete the proof we show that for every $\delta \in (0,1)$ we have $K(T^nx,\delta) \subseteq K(x,1)$ for infinitely many $n$. Given $\delta \in (0,1)$, choose $\kappa >0$ such that $\delta^{-1} - 2\kappa(1+\delta^{-1})>1$. By Lemma \ref{lemmer} we have $\|P(x)-P(T^nx)\|<\kappa$ for infinitely many $n>0$. For each such $n$ we have
\begin{align*}
\|P(x)u\| \geq \|P(T^nx)u\| - \kappa\|u\| &\geq \delta^{-1}\|Q(T^nx)u\| - \kappa \|u\|\\
&\geq \delta^{-1} \|Q(x) u\|- \kappa(1+\delta^{-1})\|u\|\end{align*}
for every $u \in K(T^nx,\delta)$, where we have used the relation $\|P(x)- P(T^nx)\| = \|Q(x) - Q(T^nx)\|$ which follows from the definition of $Q$. If $u \in K(T^nx,\delta) \setminus K(x,1)$ then additionally $\|u\| < 2\|Q(x)u\|$ and therefore
\[\|P(x) u \| > \left(\delta^{-1} - 2\kappa(1+\delta^{-1})\right)\|Q(x)u\| \geq \|Q(x)u\|\]
contradicting $u \notin K(x,1)$. We conclude that $K(T^nx,\delta) \setminus K(x,1) = \emptyset$ and therefore $K(T^nx,\delta)\subseteq K(x,1)$ as required. The proof of Theorem \ref{Boq2} is complete.

The proof of Theorem \ref{Boq3} may be undertaken by pursuing \emph{mutatis mutandis} the proof of Theorem \ref{Boq2}, if we allow the additional assumption  that $T$ is invertible. In this case we apply the following result in lieu of Theorem \ref{Theelun}.
\begin{theorem}
Let $T$ be an ergodic invertible measure-preserving transformation of a complete probability space $(X,\mathcal{F},\mu)$, let $\mathcal{A} \colon X\times \mathbb{N} \to \Mat_d(\RR)$ be a measurable cocycle such that $\int \log^+\|\mathcal{A}(x,1)\|\,d\mu(x)<\infty$, and define the quantity $\lambda := \inf_{n \geq 1} \frac{1}{n} \int \log \|\mathcal{A}(x,n)\|\,d\mu(x)$. Then there exists a measurable $T$-invariant set $\Lambda \subseteq X$ satisfying $\mu(\Lambda)=1$ with the following properties. There exists an integer $p \in \{1,\ldots,d\}$, a finite sequence $\lambda=\lambda_1>\ldots>\lambda_p\geq -\infty$, and a corresponding sequence of measurable functions $F_1,\ldots,F_p$ from $\Lambda$ into the Grassmannian of $\RR^d$, such that for every $x \in \Lambda$ we have $F_1(x) \oplus \cdots \oplus F_p(x) = \RR^d$, $\mathcal{A}(x,n)F_i(x) \subseteq F_i(T^nx)$ for each $1 \leq i \leq p$ and $n \in \NN$, and
\[\lim_{n \to \infty}\frac{1}{n} \log \frac{\|\mathcal{A}(x,n)v\|}{\|v\|}= \lambda_i\]
uniformly for $v \in F_i(x) \setminus \{0\}$.
\end{theorem}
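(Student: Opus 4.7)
The plan is to identify this result as the classical Oseledets multiplicative ergodic theorem, adapted to $\Mat_d(\RR)$-valued cocycles over an invertible ergodic transformation; the proof follows the standard approach of Raghunathan or Ruelle with one modification to accommodate the possibility of $\lambda_p = -\infty$.

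First I would extract the Lyapunov spectrum by applying Kingman's subadditive ergodic theorem to each of the exterior power cocycles $\bigwedge^k \mathcal{A}$, $1 \leq k \leq d$: the sequences $a_n^{(k)}(x) := \log\|\bigwedge^k \mathcal{A}(x,n)\|$ are subadditive on $T$-orbits by submultiplicativity of the operator norm on $k$-fold exterior powers, and the hypothesis on $\log^+\|\mathcal{A}(x,1)\|$ gives the required integrability of $(a_1^{(k)})^+$. Kingman's theorem together with ergodicity then yields $\mu$-a.e.\ constant limits $\Lambda_k \in [-\infty, \infty)$, and the classical identification $\Lambda_k = \mu_1 + \cdots + \mu_k$ for a nonincreasing sequence $\mu_1 \geq \cdots \geq \mu_d$ produces the Lyapunov exponents with multiplicity; collecting the distinct values yields $\lambda = \lambda_1 > \cdots > \lambda_p \geq -\infty$ with multiplicities $r_i$.

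Next I would construct the invariant subspaces $F_i(x)$. If $\lambda_p = -\infty$ I would first isolate this stratum by setting $F_p(x) := \bigcup_{n \geq 1} \ker \mathcal{A}(x,n)$, a Borel cocycle-invariant subspace of $\mu$-a.e.\ constant dimension on which the asserted limit statement is trivial (under the convention $\log 0 = -\infty$). For the remaining finite-exponent directions, I would combine the forward Oseledets filtration $\RR^d = V_1(x) \supseteq V_2(x) \supseteq \cdots$ (characterized intrinsically by $v \in V_i(x) \setminus V_{i+1}(x)$ iff the forward Lyapunov exponent along $v$ equals $\lambda_i$) with a backward construction made available by the invertibility of $T$: for each $x$ and $n$, the span $U_i^{(n)}(x)$ of the top $r_1 + \cdots + r_i$ left singular vectors of $\mathcal{A}(T^{-n}x, n)$ is a subspace of $\RR^d$ at $x$, and the spectral gap $\lambda_i > \lambda_{i+1}$ forces $(U_i^{(n)}(x))_n$ to be Cauchy in the Grassmannian with limit $U_i(x)$. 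Setting $F_i(x) := U_i(x) \cap V_i(x)$ then yields $r_i$-dimensional subspaces whose direct sum is $\RR^d$ by a dimension count, with measurability and cocycle-invariance $\mathcal{A}(x,n) F_i(x) \subseteq F_i(T^n x)$ inherited from the approximants and the filtration.

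Finally, the uniform convergence on $F_i(x) \setminus \{0\}$ follows from finite-dimensionality: pointwise convergence holds by construction, and compactness of the unit sphere in $F_i(x)$ together with the fact that the restriction of $\bigwedge^{r_i} \mathcal{A}(x,n)$ to the one-dimensional line $\bigwedge^{r_i} F_i(x)$ grows at rate exactly $r_i \lambda_i$ squeezes the individual vector growth rates uniformly to $\lambda_i$. The main obstacle I anticipate is the joint handling of the $-\infty$ stratum and the splitting: measurably selecting an invariant complement to $F_p(x)$ on which the cocycle is injective, and verifying that the pullback-limit subspaces $U_i(x)$ for finite $\lambda_i$ land transverse to $F_p(x)$, requires bookkeeping absent from the standard $GL_d(\RR)$-valued Oseledets proof. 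Once this is handled the remainder of the argument follows the classical singular-value route.
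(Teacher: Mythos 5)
You should note first that the paper does not prove this statement at all: it quotes it as a known multiplicative ergodic theorem and refers to \cite{FLQ} for the proof, remarking only that the uniformity is implicit there. Your outline (exterior powers plus Kingman for the spectrum; forward filtration intersected with limits of pulled-back top singular subspaces for the equivariant splitting; a wedge-power squeeze for uniform convergence) is essentially the route taken in that cited source, so the overall strategy is the right one. However, there is a genuine error in your treatment of the $-\infty$ stratum, and the steps you merely assert are exactly where the work lies.

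The error: defining $F_p(x):=\bigcup_{n\geq 1}\ker\mathcal{A}(x,n)$ does not capture the Oseledets space of exponent $-\infty$. An exponent $\lambda_p=-\infty$ does not force vectors to be annihilated in finite time. Take $d=2$, $T$ ergodic invertible, and $\mathcal{A}(x,1)=\mathrm{diag}\bigl(2,\epsilon(x)\bigr)$ with $\epsilon(x)\in(0,1]$ measurable and $\int\log\epsilon\,d\mu=-\infty$; the hypothesis $\int\log^+\|\mathcal{A}(x,1)\|\,d\mu<\infty$ holds, $\lambda_1=\log 2$, $\lambda_2=-\infty$, and the correct splitting is the pair of coordinate axes, yet every $\mathcal{A}(x,n)$ is injective, so your $F_p(x)=\{0\}$ and the claimed decomposition $F_1(x)\oplus F_p(x)=\RR^2$ fails. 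The correct choice is $F_p(x):=V_p(x)$, the slowest space of the forward filtration (all $v$ whose forward exponent is $-\infty$); on it the uniform convergence to $-\infty$ is not trivial but follows from, e.g., Kingman's theorem applied to the restricted cocycle, whose limit must be $-\infty$ since a finite limit would produce a finite exponent inside $V_p$. With this fix your plan matches the standard argument, but be aware that the three facts you state without proof --- the Cauchy property of the spans $U_i^{(n)}(x)$ of top singular vectors of $\mathcal{A}(T^{-n}x,n)$, their equivariance under the cocycle, and the almost-everywhere transversality $U_i(x)\oplus V_{i+1}(x)=\RR^d$ that your ``dimension count'' for $F_i(x)=U_i(x)\cap V_i(x)$ silently uses --- constitute essentially the whole technical content of the theorem, so as written the proposal is an accurate road map rather than a proof.
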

For a proof see \cite{FLQ}. The part of the statement dealing with uniform convergence is not declared in a completely explicit fashion in that article but features clearly in the proof. Since $\Mat_d(\RR)$ is separable, a suitable analogue of Lemma \ref{lemmer} may be proved easily without the additional requirement of $\mu$-continuity.

To complete the proof of Theorem \ref{Boq3} it remains to show that its result may be extended from the case of invertible $T$ to the general case. 
Given a measure-preserving transformation $T$ of a probability space $(X,\mathcal{F},\mu)$, recall from e.g. \cite{CFS} that there exist an invertible transformation $\hat T$ of a probability space $(\hat X,\hat{\mathcal{F}},\hat \mu)$ and a measurable map $\pi \colon \hat X \to X$ such that $\pi^*\hat\mu = \mu$ and $T \circ \pi = \pi \circ \hat T$ $\hat\mu$-a.e.
Now, given a cocycle $\mathcal{A} \colon X \times \mathbb{N} \to \Mat_d(\RR)$ which satisfies the conditions of Theorem \ref{Boq3}, note that the function $\hat {\mathcal{A}} \colon \hat X \times \mathbb{N} \to \Mat_d(\RR)$ defined by $\hat {\mathcal{A}}(x,n) := \mathcal{A}(\pi x,n)$ is a measurable cocycle with respect to $\hat T$. Since $\int_X \log^+ \|\mathcal{A}(x,1)\|d\mu = \int_{\hat X} \log^+\|\hat{\mathcal{A}}(x,1)\|d\hat\mu(x)$ by construction, $\hat{\mathcal{A}}$ meets the desired integrability condition and we obtain
\[\mu\left(\left\{x \in X \colon \limsup_{n\to\infty}\rho(\mathcal{A}(x,n))^{1/n} = \lim_{n \to \infty} \|\mathcal{A}(x,n)\|^{1/n}\right\}\right)\]
\[=\hat\mu\left(\left\{x \in \hat X \colon \limsup_{n\to\infty}\rho(\hat{\mathcal{A}}(x,n))^{1/n} = \lim_{n \to \infty} \|\hat{\mathcal{A}}( x,n)\|^{1/n}\right\}\right)=1\]
by applying Theorem \ref{Boq3} in the invertible case.

\section{Proof of Theorem \ref{Boq}}
To begin the proof, we claim that it is sufficient to demonstrate Theorem \ref{Boq} under the additional hypotheses that $\mathsf{A}$ is compact and consists solely of injective elements of $\mathcal{B}(\XX)$. We shall first show that if Theorem \ref{Boq} holds for compact sets of bounded operators then it also must hold for \emph{precompact} sets of bounded operators. To see this, fix a precompact set $\mathsf{A} \subseteq \BofX$ and suppose that $\hat\varrho(\overline{\mathsf{A}})=\max\{\varrho_\chi(\overline{\mathsf{A}}),\varrho_r(\overline{\mathsf{A}})\}$ and $\varrho_f(\overline{\mathsf{A}})=\varrho_\chi(\overline{\mathsf{A}})$. It follows from Proposition \ref{basic} that the maps $\|\cdot\|_\chi,  \|\cdot\|_f \colon \BofX \to \RR$ are continuous, whereupon a simple inspection of the definitions yields $\hat\varrho(\mathsf{A})=\hat\varrho(\overline{\mathsf{A}})$, $\varrho_\chi(\mathsf{A}) = \varrho_\chi(\overline{\mathsf{A}})$ and $\varrho_f(\mathsf{A})=\varrho_f(\overline{\mathsf{A}})$ so that in particular \eqref{HE} holds. If $\hat\varrho(\overline{\mathsf{A}})=\varrho_\chi(\overline{\mathsf{A}})$ then \eqref{GBWF} is clearly satisfied and the argument is complete. If otherwise, given any small enough $\varepsilon>0$ there exist infinitely many $n\in\NN$ such that
\[\sup_{A \in \overline{\mathsf{A}}^n} \rho(A)^{1/n} > \hat\varrho(\overline{\mathsf{A}}) - \varepsilon >\sup_{A \in \overline{\mathsf{A}}^n} \|A\|_\chi^{1/n}.\]
Given such an $n$, choose any $B \in \overline{\mathsf{A}}^n$ with $\rho(B)^{1/n} >\hat\varrho(\overline{\mathsf{A}}) -\varepsilon$. Since $\rho(B)>\|B\|_\chi$ the operator $B$ has essential spectral radius strictly smaller than its spectral radius, and it follows easily that $B$ is a point of continuity of the spectral radius functional $\rho \colon \BofX \to \RR$ (see  \cite[Lemma 9.3]{ST1} for details). Consequently we have
\[\sup_{A \in \mathsf{A}^n} \rho(A)^{1/n} \geq \rho(B)^{1/n} > \hat\varrho(\overline{\mathsf{A}})-\varepsilon = \hat\varrho(\mathsf{A})-\varepsilon,\]
and since this holds for infinitely many $n$ we deduce that $\varrho_r(\mathsf{A}) \geq \hat\varrho(\mathsf{A})-\varepsilon$. We conclude that $\varrho_r(\mathsf{A})=\hat\varrho(\mathsf{A})$ and \eqref{GBWF} is satisfied.

We next show that if Theorem \ref{Boq} holds for compact sets of injective bounded operators then it must hold for all compact sets of bounded operators. We apply a trick used by R. Ma\~n\'e \cite{Mane} and P. Thieullen \cite{T} which resembles the construction of the invertible natural extension of a dynamical system. Define a new Banach space $(\XX_\infty,\|\cdot\|)$ by $\XX_\infty:= \XX^{\mathbb{N}}$ and $\|(v_i)_{i \in \mathbb{N}}\| = \sup\{\|v_i\| \colon i\in \mathbb{N}\}$. Let $(\alpha_i)_{i\in\mathbb{N}}$ be a strictly decreasing sequence in $(0,1]$ with the property that for any subadditive sequence $(a_n)_{n \in \mathbb{N}}$,
\[\lim_{n \to \infty} \frac{a_n}{n} = \lim_{n \to \infty}\frac{1}{n}\max_{0 \leq k \leq n} \left( a_{n-k}+\sum_{i=0}^k\log \alpha_i\right).\]
(The existence of such a sequence was proved in \cite{T}). Define a map $\mathfrak{E} \colon \mathcal{B}(\XX) \to \mathcal{B}(\XX_\infty)$ by
$\mathfrak{E}(L) v_1 = Lv_1$, $\mathfrak{E}(L)v_{i+1}=\alpha_i v_i$. Clearly $\mathfrak{E}(L)$ is an injective operator for any $L \in \mathcal{B}(\XX)$. The reader may easily verify that for each $n \in \mathbb{N}$ and $L_1,\ldots,L_n \in \mathcal{B}(\XX)$,
\[\|\mathfrak{E}(L_n)\cdots\mathfrak{E}(L_1)\|= \max_{0 \leq k \leq n}\left(\| L_{n-k}\cdots L_1\|.\prod_{i=0}^{k} \alpha_i\right)\]
with the same relation holding for the seminorms $\|\cdot\|_\chi$ and $\|\cdot\|_f$. As a particular consequence it follows that $\rho(\mathfrak{E}(L_n)\cdots \mathfrak{E}(L_1)) = \rho(L_n\cdots L_1)$ for any $L_1,\ldots,L_n$ and hence $\varrho_r(\mathfrak{E}(\mathsf{A})) = \varrho_r(\mathsf{A})$. Since for each $n \in \NN$
\[\sup_{A \in \mathfrak{E}(\mathsf{A})^n} \|A\|^{1/n} = \max_{0 \leq k \leq n} \left(\sup_{A \in \mathsf{A}^{n-k}} \|A\|.\prod_{i=0}^k \alpha_i\right)^{1/n}\]
and similarly for $\|\cdot\|_\chi$ and $\|\cdot\|_f$, we conclude that $\hat\varrho(\mathsf{A})=\hat\varrho(\mathfrak{E}(\mathsf{A}))$, $\varrho_\chi(\mathsf{A})=\varrho_\chi(\mathfrak{E}(\mathsf{A}))$ and $\varrho_f(\mathsf{A})=\varrho_f(\mathfrak{E}(\mathsf{A}))$. Since the map $\mathfrak{E}$ is clearly continuous, $\mathfrak{E}(\mathsf{A})$ is a compact subset of $\mathcal{B}(\XX_\infty)$, and so if the conclusion of Theorem \ref{Boq} is valid for the compact set of injective operators $\mathfrak{E}(\mathsf{A})$ then it must be valid for $\mathsf{A}$ also.

For the remainder of this section, therefore, we shall assume that $\mathsf{A} \subset \mathcal{B}(\XX)$ is a compact nonempty set of injective bounded operators. Without loss of generality we shall assume $\hat\varrho(\mathsf{A})>\varrho_\chi(\mathsf{A}) \geq 0$, since if this relation does not hold then $\hat\varrho(\mathsf{A})=\varrho_f(\mathsf{A})=\varrho_\chi(\mathsf{A})$ by Proposition \ref{basic}, and Theorem \ref{Boq} thus holds trivially.

The following theorem on subadditive function sequences derives from a theorem of S. J. Schreiber \cite{Sc}. A similar result was also given by R. Sturman and J. Stark independently of Schreiber's work \cite{StSt}. A complete proof may be found in \cite{M}.
\begin{theorem}\label{StarkSturman}
Let $T \colon X \to X$ be a continuous transformation of a compact metric space, and let $\mathcal{M}_T$ be the set of all $T$-invariant Borel probability measures on $X$. Let $(f_n)_{n \geq 1}$ be a sequence of upper semi-continuous functions $f_i \colon X \to \mathbb{R}\cup\{-\infty\}$ such that $f_{n+m}(x) \leq f_n(T^mx) + f_m(x)$ for every $x \in X$ and $n,m \in \NN$. Then
\[\lim_{n \to \infty}  \sup_{x \in X} \frac{1}{n}f_n(x) = \sup_{\mu \in \mathcal{M}_T} \inf_{n \geq 1} \frac{1}{n} \int f_n\,d\mu.\]
\end{theorem}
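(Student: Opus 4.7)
The plan is to prove the two inequalities separately. The easy direction, $\sup_{\mu \in \mathcal{M}_T}\inf_n \frac{1}{n}\int f_n\,d\mu \leq \lim_n \frac{1}{n}\sup_X f_n$, follows from the fact that each $f_n$ is bounded above (being upper semi-continuous on compact $X$), so that $\int f_n\,d\mu \leq \sup_X f_n$ for every $T$-invariant probability measure $\mu$. The sequence $a_n := \sup_X f_n$ is subadditive by the cocycle hypothesis combined with the trivial inequality $\sup_X (f_n \circ T^m) \leq \sup_X f_n$, so the limit on the right exists in $[-\infty,+\infty)$ and equals $\inf_n a_n/n$.

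For the reverse inequality, set $L := \lim_n a_n/n$; the case $L=-\infty$ renders the claimed equality trivial, so we assume $L > -\infty$. By upper semi-continuity and compactness, pick $x_n \in X$ attaining $f_n(x_n) = a_n$, and form the empirical measures $\mu_n := \frac{1}{n}\sum_{k=0}^{n-1}\delta_{T^k x_n}$. Passing to a subsequence, $\mu_{n_j}$ converges weak-$*$ to a Borel probability measure $\mu$; continuity of $T$ and the standard Krylov--Bogolyubov calculation show that $\mu$ is $T$-invariant. It suffices to establish $L \leq \frac{1}{m}\int f_m\,d\mu$ for each fixed $m \geq 1$.

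Fix $m$ and, for each $i \in \{0,\ldots,m-1\}$, write $n - i = q_i m + r_i$ with $0 \leq r_i < m$. Iterating the subadditive hypothesis gives
\[f_n(x_n) \leq f_i(x_n) + \sum_{j=0}^{q_i-1} f_m(T^{i+jm}x_n) + f_{r_i}(T^{i+q_i m}x_n).\]
Summing these $m$ inequalities and observing that $\{i+jm : 0 \leq i < m,\ 0 \leq j < q_i\}$ is an exact enumeration of $\{0,1,\ldots,n-m\}$, then dividing by $mn$, yields
\[\frac{1}{n} f_n(x_n) \leq \frac{1}{m}\int f_m\,d\tilde\mu_n + C_n,\]
where $\tilde\mu_n := \frac{1}{n}\sum_{k=0}^{n-m}\delta_{T^k x_n}$ has total mass $1-O(1/n)$ and the same weak-$*$ limit $\mu$ as $\mu_n$, and $C_n$ collects the boundary terms $f_i(x_n)$ and $f_{r_i}(T^{i+q_i m}x_n)$.

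Since each $f_0,\ldots,f_m$ is bounded above on compact $X$, one has $C_n \leq O(1/n)$ and hence $\limsup_n C_n \leq 0$. The Portmanteau theorem for upper semi-continuous functions bounded above---which remains valid for functions taking the value $-\infty$, via approximation from above by continuous functions---gives $\limsup_j \int f_m\,d\tilde\mu_{n_j} \leq \int f_m\,d\mu$. Combined with $a_{n_j}/n_j \to L$, this yields $L \leq \frac{1}{m}\int f_m\,d\mu$; taking the infimum over $m$ and the supremum over $\mu$ completes the proof. The principal technical subtlety is ensuring that the boundary collection $C_n$---whose individual summands may in principle be very negative---contributes only $O(1/n)$ to the upper bound; this is automatic from the one-sided boundedness that upper semi-continuity on compact $X$ provides, but is the sort of thing where a careless application of dominated or monotone convergence (which would require a two-sided bound) would break the argument.
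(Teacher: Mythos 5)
Your proof is correct. Note that the paper itself does not prove Theorem \ref{StarkSturman} at all---it attributes the result to Schreiber and to Stark--Sturman and refers to \cite{M} for a complete proof---and your argument (maximizing points $x_n$, empirical measures along their orbits, the subadditive decomposition over residue classes giving exactly the block $\{0,\ldots,n-m\}$, and the upper-semicontinuous Portmanteau inequality applied to the weak-$*$ limit) is essentially the standard proof found in those references, with the genuine subtleties (one-sided boundedness of the boundary terms, possible $-\infty$ values, the mass deficit of $\tilde\mu_n$) correctly identified and handled.
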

%We also require the following version of the subadditive ergodic theorem.
%\begin{theorem}[Subadditive ergodic theorem]
%Let $T$ be a measure-preserving transformation of a probability space $(X,\mathcal{F},\mu)$, and let $(f_n)_{n \geq 1}$ be a sequence of measurable functions $f_i \colon X \to [-\infty,\infty)$ such that $\int \max\{f_1,0\}\,d\mu<\infty$. Then $\frac{1}{n}f_n(x)$ converges to $\mu$-a.e (where the limit at some points may be $-\infty$) and
%\[\lim_{n \to  \infty} \frac{1}{n} \int f_n\,d\mu = \inf_{n \geq 1} \frac{1}{n}\int f_n\,d\mu = \int \lim_{n \to \infty} \frac{1}{n}f_n\,d\mu.\]
%\end{theorem}

Define $X = \mathsf{A}^{\mathbb{Z}}$ and equip this set with the product topology under which it is compact and metrisable. Define a homeomorphism $T \colon X\to X$ by $T[(A_i)_{i \in \mathbb{Z}}]=(A_{i+1})_{i \in \mathbb{Z}}$ and a continuous map $\Pi \colon X\to \BofX$ by $\Pi[(A_i)_{i \in \mathbb{Z}}] = A_1$. Define a continuous cocycle $\mathcal{A} \colon X \times \NN \to \BofX$ by setting $\mathcal{A}(x)=\Pi(T^{n-1}x)\cdots \Pi(x)$ for each $x \in X$ and $n \geq 1$, and let $\mathcal{M}_T$ denote the set of all $T$-invariant Borel probability measures on $X$, which is nonempty by the Krylov-Bogolioubov Theorem. We have
\[\hat\varrho(\mathsf{A}) = \lim_{n \to \infty} \sup_{x \in X} \|\mathcal{A}(x,n)\|^{1/n},\qquad\varrho_\chi(\mathsf{A}) = \lim_{n \to \infty} \sup_{x \in X} \|\mathcal{A}(x,n)\|_\chi^{1/n},\]
\[\varrho_f(\mathsf{A}) = \lim_{n \to \infty} \sup_{x \in X} \|\mathcal{A}(x,n)\|_f^{1/n},\qquad\varrho_r(\mathsf{A}) = \limsup_{n \to \infty} \sup_{x \in X} \rho(\mathcal{A}(x,n))^{1/n}\]
directly from the definitions. Applying Theorem \ref{StarkSturman} with $f_n(x):=\log \|\mathcal{A}(x,n)\|$ we deduce
\[\log \hat\varrho(\mathsf{A}) = \sup_{\mu \in \mathcal{M}_T} \inf_{n \geq 1} \frac{1}{n} \int \log \|\mathcal{A}(x,n)\|\,d\mu(x).\]
By \cite[Lemma 3.5]{M} we may choose $\mu \in \mathcal{M}_T$ such that
\[\inf_{n\geq 1} \frac{1}{n} \int \log \|\mathcal{A}(x,n)\|\,d\mu(x)=\log \hat\varrho(\mathsf{A})> \log\varrho_\chi(\mathsf{A}).\]
Let $\Lambda$, $\lambda$, $\chi$ be as in Theorem \ref{Boq2}. By the subadditive ergodic theorem we have $\int \lambda \,d\mu = \log \hat\varrho(\mathsf{A})$ and thus there is a positive measure set $Z \subseteq \Lambda$ such that every $z \in Z$ satisfies
\[\lambda(z) \geq \log \hat\varrho(\mathsf{A}) > \log \varrho_\chi(\mathsf{A}) \geq \lim_{n \to \infty} \frac{1}{n} \log\|\mathcal{A}(z,n)\|_\chi =\chi(z).\]
Applying Theorem \ref{Boq2} we deduce that every $z \in Z$ satisfies
\[\limsup_{n \to \infty} \frac{1}{n}\log \rho(\mathcal{A}(z,n)) =\lambda(z) \geq\log \hat\varrho(\mathsf{A})\]
and thus $\log \varrho_r(\mathsf{A})\geq \log\hat\varrho(\mathsf{A})$. 
Since clearly $\varrho_r(\mathsf{A}) \leq \hat\varrho(\mathsf{A})$ we conclude that $\hat\varrho(\mathsf{A})=\varrho_r(\mathsf{A})$ which yields \eqref{GBWF}.

By Proposition \ref{basic} we have $\varrho_f(\mathsf{A}) \geq \varrho_\chi(\mathsf{A})$. Suppose $\varrho_f(\mathsf{A})>\varrho_\chi(\mathsf{A})$. Applying Theorem \ref{StarkSturman} with $f_n(x) := \log \|\mathcal{A}(x,n)\|_f$ we deduce that there exists $\mu \in \mathcal{M}_T$ for which
\[\inf_{n\geq 1} \frac{1}{n}\int \log \|\mathcal{A}(x,n)\|_f\,d\mu(x)>\varrho_\chi(\mathsf{A}).\]
Via Proposition \ref{basic} this implies
\[\inf_{n\geq 1} \frac{1}{n}\int \log \|\mathcal{A}(x,n)\|\,d\mu(x)> \inf_{n\geq 1} \frac{1}{n}\int \log \|\mathcal{A}(x,n)\|_\chi\,d\mu(x)\]
and so Theorem \ref{Boq2} is applicable. Applying the subadditive ergodic theorem again we deduce that there is a positive-measure set $Z \subseteq \Lambda$ such that every $z \in Z$ satisfies
\[\lim_{n \to \infty} \frac{1}{n} \log \|\mathcal{A}(z,n)\|_f > \varrho_\chi(\mathsf{A}) \geq \lim_{n \to \infty} \frac{1}{n} \log \|\mathcal{A}(z,n)\|_\chi = \chi(z)\]
contradicting \eqref{anzab}. We conclude that $\varrho_f(\mathsf{A}) = \varrho_\chi(\mathsf{A})$ and the proof is complete.
\section{Acknowledgements}
This research was supported by EPSRC grant EP/E020801/1. The author would like to thank V. S. Shulman and Yu. V. Turovski\u{\i} for helpful suggestions.
\bibliographystyle{amsplain}
\bibliography{BWF}
\end{document}